\theoremstyle{plain}
\newcommand{\RR}{{\bf R}}
\newcommand{\ZZ}{{\bf Z}}
\numberwithin{equation}{section}
\author{Fran\c cois Maucourant}
\address{Universit\'e Rennes I, IRMAR, Campus de Beaulieu 35042 Rennes
cedex -  France}
\email{francois.maucourant@univ-rennes1.fr}
\author{Barak Weiss}
\address{Ben Gurion University, Be'er Sheva, Israel 84105}
\email{barakw@math.bgu.ac.il}
\newif\ifdraft\drafttrue
\font\sb = cmbx8 scaled \magstep0
\font\sn = cmssi8 scaled \magstep0
\font\si = cmti8 scaled \magstep0
\long\def\combarak#1{\ifdraft{\si #1 }\else\ignorespaces\fi}
\long\def\comfm#1{\ifdraft{\sb #1 }\else\ignorespaces\fi}
\newcommand\name[1]{\label{#1}{\ifdraft{\sn [#1]}\else\ignorespaces\fi}}
\newcommand\eq[2]{{\ifdraft{\ \tt [#1]}\else\ignorespaces\fi}\begin{equation}\label{eq:
#1}{#2}\end{equation}}
\newcommand {\equ}[1]     {\eqref{eq: #1}}
\newcommand{\R}{{\mathbb{R}}}
\newcommand{\M}{{\operatorname{M}}}
\newcommand{\Z}{{\mathbb{Z}}}
\newcommand{\PP}{\mathcal{P}}
\newcommand{\vv}{{\mathbf{v}}}
\newcommand{\vu}{{\mathbf{u}}}
\newcommand{\vw}{{\mathbf{w}}}
\newcommand{\SL}{\operatorname{SL}}
\newcommand{\PSL}{\operatorname{PSL}}
\newcommand{\gmg}{\Gamma \backslash G}
\newcommand{\dist}{{\mathrm{dist}}}
\newcommand{\til}{\widetilde}
\newcommand{\supp}{{\rm supp}}
\newcommand{\sm}{\smallsetminus}
\newcommand{\vre}{\varepsilon}
\font\sb = cmbx8 scaled \magstep0
\newcommand {\ignore}[1]  {}
\newtheorem{thm}{Theorem}[section]
\newtheorem{lem}[thm]{Lemma}
\newtheorem{cor}[thm]{Corollary}
\newtheorem{remark}[thm]{Remark}
\begin{document}
\title[Lattice action on the plane]{Lattice actions on the plane revisited}


\begin{abstract} 
We study the action of a lattice $\Gamma$ in the group $G=\SL(2, \RR)$
on the plane. We obtain a
formula which simultaneously describes visits of an orbit $\Gamma \vu$
to either a fixed ball, or an expanding or contracting family of
annuli. We also discuss the `shrinking target problem'. Our results
are valid for an explicitly described set of
initial points: all $\vu \in \RR^2$ in the case of a cocompact lattice, and
all $\vu$ satisfying certain diophantine conditions in case $\Gamma =
\SL(2,\Z)$. The proofs combine the method of Ledrappier with effective
equidistribution results for the horocycle flow on $\gmg$ due to
Burger, Str\"ombergsson, Forni and Flaminio. 
\end{abstract}

\maketitle

 \section{Introduction, statement of the results}
A classical problem in ergodic theory is to understand the
distribution of orbits for the action of a group on a
space. This has been particularly well-studied under the hypotheses
that the
acting group $\Gamma$ is {\em amenable} and preserves a {\em finite
measure}. Removing these two assumptions leads to a realm which is not
sufficiently understood. Our purpose in this note is to describe some
features which arise when one studies a non-amenable group acting on a
space preserving an infinite Radon measure. We will consider the simplest
setup with these features. Namely, let $\Gamma$ be a 
lattice in $G=\SL(2,\RR)$, that is, a discrete subgroup of finite
covolume. It acts on the punctured plane $\PP= \RR^2 \sm \{0\}$ by
linear transformations, preserving  
Lebesgue measure. It is well-known that this action is
ergodic. Moreover, when $\Gamma$ is cocompact all orbits are dense,
and when $\Gamma$ is non-uniform, any 
orbit is either discrete or dense. 

Consider an orbit $\Gamma \vu$ and an increasing family
$\{\Gamma_T: T>0\}$ of finite sets in $\Gamma$. We will refer to 
$\Gamma_T \vu \subset \PP$ as a `cloud'; we wish to understand its
distribution for large values of $T$. For example one can
ask for the frequency of visits to a fixed 
ball in the plane. One can also consider
the behavior of the orbit under rescaling, i.e. the frequency of
visits to a family of balls; if the balls are expanding with $T$ this
corresponds to the `large scale' behavior of the orbit and if the
balls are shrinking this corresponds to the behavior of the orbit `at
a point.' The answers to these questions turn out to depend rather
delicately on the choice of the averaging sets $\Gamma_T$ and the
initial point $\vu$. 

Fix a norm $\| \cdot\|$ on $\M_2(\RR)$, the space of two by two
matrices with real entries.
  Define for any $T>0$ the set 
$$\Gamma_T=\left\{ \gamma \in \Gamma \, : \, \| \gamma \| \leq T
\right\},$$ 
let $f$ be a compactly supported function on $\PP$ and $\vu
\in \RR^2$. 

\begin{figure}[h]
\begin{center}

\epsfig{file=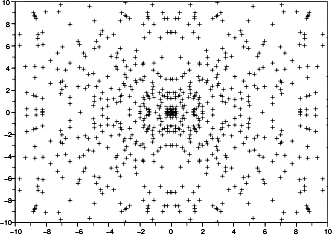,width=300.00pt,height=300.00pt}
\caption{`Cloud' for the cocompact lattice $\SL_1(D_{2,3}(\ZZ)),\, \vu = (1,0), \, T=100.$}

\end{center}
\end{figure}

\ignore{
\comfm{ The preceding picture illustrates the case where $\|\cdot \|$ is the usual $L^2$ norm, 
the cocompact lattice is the subgroup usually denoted by $SL(1,D_{2,3}({\bf{Z}}))$, more precisely:
$$\Gamma=\left\{ \left[ \begin{array}{cc} x+\sqrt{2}y & z+\sqrt{2}t \\
3(z-\sqrt{2}t) & x-\sqrt{2}y  \end{array}\right] \; : \; (x,y,z,t) \in
{\bf{Z}}^4, \; x^2-2y^2-3z^2+6t^2=1 \right\},$$  
with the initial vector $\vu=\left( \begin{array}{c} 1\\ 0 \end{array} \right)$ and time $T=80$.
}}
The asymptotics of the orbit-sum 
$$S_{f,\vu}(T)=\sum_{\gamma \in \Gamma_T} f(\gamma \vu)$$
were studied in \cite{l, no, gw}. 
Write $\vu=\left[ \begin{array}{c} u_1 \\ u_2
\end{array}\right] \in \RR^2$, 
 and define a norm $| \cdot |$ and a `product' $\star$ on $\RR^2$ by
  $$|\vv|=\max \left\{|v_1|, |v_2| 
\right\}, \ \ \ 
\vv\star \vu=\Biggl\| \left[
     \begin{array}{cc}
      -u_2 v_1 & u_1v_1 \\
      -u_2 v_2 & u_1v_2 \\
     \end{array}
    \right]\Biggr\|,$$
where $\vv=\left[ \begin{array}{c} v_1\\  v_2 \end{array}\right]$.
Let $dx$ denote the
Lebesgue measure on $\PP$. It was shown in the above-mentioned
papers (see particularly \cite[\S12.4]{gw}) that
$$
\frac{S_{f,\vu}(T)}{T}
\ \longrightarrow_{T \to \infty} \  \frac{2}{\mu(\gmg)} \int_{\PP}
\frac{f(\vv)}{\vv \star \vu}\, d\vv .
$$

We would like to understand $S_{f,\vu}(T)$ 
at a {\it finite} time $T$, i.e. obtain an effective error estimate in
this asymptotic
formula. In particular we would
like to be able to change the function $f$ and the initial point $\vu$
depending on the time $T$. Before stating our results we introduce
some notation. 

Write $\supp \, f=\overline{f^{-1}(\RR \sm \{0\})}$, and set 
\[\begin{split}
r(f) & = \inf_{\vv \in \supp \, f } |\vv|, \ \ 
\ \ \ \ \  
R(f)=\sup_{\vv \in \supp \, f } 
|\vv|, \ \ \ \ \  v(f)=
\frac{R(f)}{r(f)}.
\end{split}
\]
The homogeneous space $\gmg$ carries a finite measure $\mu$ invariant
under the right action of $G$; we will normalize this measure by
assuming that its lift to $G$ satisfies \equ{eq: haar normalization}. 
Fix $\theta \in (0,1]$. We say that a continuous compactly supported function $f$ on $\PP$ is
{\em $\theta$-H\"older} if $\|f\|_\theta < \infty$, where 
\eq{eq: Holder norm}{
\|f\|_\theta=\sup |f|+ \left( \int_\PP f(\vv)^2 \frac{d\vv}{|\vv|^2}
\right)^{1/2}+\sup_{0<|x-y|\leq |x|/2} \frac{ |x|^\theta
|f(x)-f(y)|}{|x-y|^\theta}.
}

\begin{thm}\name{thm: main, cocompact}
For a cocompact lattice $\Gamma$ in $G$ there are positive constants
 $c$ and $\delta_0$
such that for any $\theta \in (0,1]$, there is a
positive constant $C_\theta$ such that the following holds. For any $\vu \in \PP$
and for any $\theta$-H\"older $f: \PP \to \RR$, of
compact support, let
\eq{eq: defn D0}{
D_0=D_0(\vu,f)=\max \left( \frac{R(f)}{|\vu|}, \frac{|\vu|}{r(f)} \right)
}
and
\eq{eq: defn B}{
B=B(\vu,f)= \left( \frac{R(f)}{|\vu|}\right)^{-\theta \delta_0}(\log v(f)+1).
}
Then for any 
\eq{eq: choice of T0}{
T>T_0=c D_0 
}
one has
 \eq{eq: main}{
\left|S_{f,\vu}(T)- \frac{2T}{\mu(\gmg)} \int_{\PP}
\frac{f(\vv)}{\vv \star \vu}\, d\vv \right| \leq C_\theta 
\|f\|_{\theta} \frac{R(f)}{|\vu|} \left( D_0+B T^{1-\theta \delta_0}\right).
}
\end{thm}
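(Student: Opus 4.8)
The plan is to convert the orbit-sum $S_{f,\vu}(T)$ into an integral over a piece of the horocycle flow on $\gmg$, and then apply the effective equidistribution results of Burger--Str\"ombergsson--Forni--Flaminio. The starting observation is that the stabilizer in $G$ of the vector $\E=\left[\begin{smallmatrix}1\\0\end{smallmatrix}\right]$ (say) is the upper-triangular unipotent subgroup $\U=\{u_s\}$, so there is an identification $G/\U\cong \PP$ sending $g\U\mapsto g\E$; under this map the orbit $\Gamma\vu$ pulls back to a $\U$-invariant subset of $\gmg$. Writing $\vu=g_0\E$, the sum $\sum_{\gamma\in\Gamma_T}f(\gamma\vu)$ becomes a sum over those $\gamma$ with $\|\gamma g_0\|\le T$ of the value of $f$ at $\gamma g_0\E$, and the constraint $\|\gamma g_0\|\le T$ translates, after the Iwasawa/Cartan-type decomposition $\gamma g_0 = k\, a_t\, u_s$, into a constraint on $(t,s)$. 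The key point — this is the geometric heart of the method of Ledrappier (and of \cite{l,no,gw}) — is that for fixed $\gamma g_0$ the matrix norm $\|\gamma g_0 u_s\|$, as a function of $s$, grows roughly linearly in $|s|$ with leading coefficient controlled by the bottom row of $\gamma g_0$; so the set $\{s:\|\gamma g_0 u_s\|\le T\}$ is an interval whose length is comparable to $T$ divided by that coefficient, and this coefficient is exactly what produces the weight $1/(\vv\star\vu)$ in the limit formula. Carrying this out carefully, one rewrites
\[
S_{f,\vu}(T)=\int_{0}^{L(T)} F(x_0\, u_s)\,ds + (\text{boundary terms}),
\]
where $x_0\in\gmg$ is the image of $g_0$, $F$ is a function on $\gmg$ built from $f$ by summing over the fiber $\U$ (so $F$ is essentially $f$ transported to $\gmg$, and $\|F\|$ in the relevant Sobolev/H\"older sense is controlled by $\|f\|_\theta$ times $R(f)/|\vu|$), and $L(T)\asymp T$.

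Next I would invoke the effective equidistribution of the horocycle flow: for $x_0\in\gmg$ and a sufficiently smooth $F$,
\[
\frac{1}{L}\int_0^L F(x_0 u_s)\,ds = \frac{1}{\mu(\gmg)}\int_{\gmg}F\,d\mu + O\!\left(\|F\|_{\mathrm{Sob}}\, L^{-\delta_0}\right),
\]
with the rate $\delta_0$ uniform over $x_0$ (this uniformity is exactly what the cocompact case buys us — no cusp excursions to worry about, and the only spectral input is the spectral gap, which gives a clean exponent). The constant $\delta_0$ and the implicit constant are the $c,\delta_0,C_\theta$ of the statement. Multiplying through by $L\asymp T$, the main term $\frac{L}{\mu(\gmg)}\int F\,d\mu$ must be identified with $\frac{2T}{\mu(\gmg)}\int_\PP \frac{f(\vv)}{\vv\star\vu}\,d\vv$: this is a change-of-variables computation on $G$ using the normalization \equ{eq: haar normalization}, disentangling the $s$-parametrization from the $d\vv$ integral (the factor $2$ and the $\star$-weight appear here), and one checks that the error from $L(T)\asymp T$ not being exactly proportional to $T$, together with the boundary terms in the first display, is absorbed into the $D_0$ term of \equ{eq: main}. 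The hypothesis $T>cD_0$ guarantees that the horocycle arc is long enough (in units set by the scales $r(f),R(f),|\vu|$) for the equidistribution estimate to be in its effective range; the factor $B=(R(f)/|\vu|)^{-\theta\delta_0}(\log v(f)+1)$ records how the Sobolev norm of $F$ and the length rescaling degrade when $f$ is supported very far out or on a very thin annulus, i.e. it is the price of allowing $f$ to depend on $T$.

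The main obstacle, and the step requiring the most care, is the \emph{uniformity and explicit scale-tracking} in the reduction from $S_{f,\vu}(T)$ to $\frac1L\int_0^L F(x_0u_s)\,ds$: one must control (i) the error in approximating the norm-ball constraint $\{\|\gamma g_0 u_s\|\le T\}$ by a clean interval — the boundary of the ball meets $O(1)$ horocycle arcs but each contributes an error that must be bounded in terms of the H\"older data of $f$ and the last term of \equ{eq: Holder norm}; (ii) the dependence of everything on the ratios $R(f)/|\vu|$ and $|\vu|/r(f)$, which is why $D_0$ and $v(f)$ enter, and why one cannot simply cite equidistribution for a fixed test function; and (iii) verifying that $F$, although built by an infinite sum over the non-compact group $\U$, is genuinely a well-defined compactly-supported H\"older function on $\gmg$ with the claimed norm bound — here compactly-supportedness of $f$ on $\PP$ and cocompactness of $\Gamma$ are both used. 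Once these bookkeeping issues are handled, plugging in the Burger/Str\"ombergsson/Forni--Flaminio estimate is essentially automatic.
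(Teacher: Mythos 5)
Your overall strategy --- thickening $f$ along the $H$-fibers to a function on $\gmg$, using the approximate identity $\|\gamma\|\approx |c_{\vu}(\gamma)|\,(\gamma\vu\star\vu)$ to turn the norm-ball condition into a condition on the horocycle parameter, and then invoking Burger-type effective equidistribution --- is the paper's strategy. But your central reduction, the formula $S_{f,\vu}(T)=\int_0^{L(T)}F(x_0u_s)\,ds+(\text{boundary terms})$ with a \emph{single} length $L(T)\asymp T$, has a genuine gap. For a given $\gamma$ the admissible range of $s$ is, up to an error controlled by $D_0$, the interval $|s|\leq T/(\gamma\vu\star\vu)$, and the quantity $\gamma\vu\star\vu$ varies over $\supp f$ by the factor $v(f)$, which may be arbitrarily large. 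If $F$ is the fiber-summed transport of $f$ (the paper's $\bar f$), then with the normalization \equ{eq: haar normalization} one has $\int_{\gmg}F\,d\mu=\int_{\RR^2}f\,dx$, so equidistribution applied to a single integral of length $\asymp T$ produces a main term proportional to $T\int f\,dx$, i.e.\ plain Lebesgue measure --- the weight $1/(\vv\star\vu)$ is \emph{not} a Jacobian of the identification $G/H\cong\PP$ and cannot be recovered by a change of variables afterwards; it is a dynamical effect of the cutoff length depending on the radial position of each orbit point. The discrepancy between the two densities is of the same order as the main term whenever $v(f)$ is large, so it cannot be absorbed into ``boundary terms''.

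The paper's mechanism for this, absent from your outline, is the radial partition of unity of Section 4: $f=\sum_\ell f_\ell$ with each $f_\ell$ supported where $\vv\star\vu$ is constant up to a factor $e^{2/\alpha}$, so that for each piece the integration cutoffs $1+(T+D)/r_\ell$ and $(T-D)/R_\ell-1$ agree up to $1+O(1/\alpha)+O(D/T)$ and $S_{f_\ell,\vu}(T)$ can be sandwiched between two horocycle integrals of $\bar f_\ell$ along $x_0a_{2\ell/\alpha}$ (the shift by $a_{2\ell/\alpha}$ also rescales each piece into a fixed annulus $A_\sigma$, which is what keeps the H\"older-norm constants uniform in $f$ and $\vu$, cf.\ Lemmas \ref{lem:holdercontrol} and \ref{lem:fixsigma}). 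The price is $\asymp\alpha\log v(f)$ pieces with $\|f_\ell\|_\theta\ll\alpha\|f\|_\theta$, and the specific error term in \equ{eq: main} --- the factor $B$ with its $\log v(f)+1$, and $\delta_0=\delta_\Gamma/21$ rather than the raw equidistribution exponent --- comes precisely from optimizing the fineness $\alpha$ as a power of $T$ against these losses, together with upper and lower estimates done separately with $r_\ell$ and $R_\ell$. Without this (or an equivalent) decomposition the quantitative statement is not reachable from your outline; the bookkeeping issues you flag as (i)--(iii) are real but secondary to this missing step.
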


\begin{remark}
\begin{enumerate}
\item
Our proof shows one may take $\delta_0 = \delta_{\Gamma}/21$, where
$\delta_{\Gamma} \leq 1/2$ satisfies $\delta_\Gamma(\delta_\Gamma -1)
\leq \lambda,$ with $\lambda < 0$ the first eigenvalue of the Laplacian on
$\gmg$. Our exponent is not optimal. 
\item
 The inequality \equ{eq: main} behaves as should be expected under
rescaling. More precisely, for any $\lambda>0$, if one replaces $f(\vv)$ by
$g(\vv)=f(\lambda \vv)$, and $\vu$ by $\vw=\lambda^{-1} \vu$,
then $D_0(\vu,f)=D_0(\vw,g)$, $B(\vu,f)=B(\vw,g)$, $T_0$ does not
change either, and both sides of \equ{eq: main} are unaffected. 
\end{enumerate}
\end{remark}

The explicit error term in Theorem \ref{thm: main, cocompact} is useful for
studying the asymptotic behavior of an orbit under
rescaling. For an `expansion coefficient' $\rho>0$ consider the function 
$\displaystyle{f_\rho(x) = f\left(\frac x{\rho} \right)}.$
Then for a parameter $\alpha$,  $f_{T^\alpha}$ describes a one-parameter
family of 
functions, and we are interested in sampling them with the cloud
$\Gamma_T \vu$. For instance, if $f$ is the indicator function of an
annulus of radius 1 then the orbit-sum $S_{f_{T^\alpha}, \vu}(T)$ describes the
number of orbit points in the cloud contained in the similar annulus
of radius $T^{\alpha}$. Since the diameter of the cloud is
approximately $T$, if 
$\alpha >1$ the orbit-sum will vanish for large $T$, and similarly for
$\alpha<-1$. However as long as the expansion of the cloud is faster
than that of the support of the expanded function, the cloud
equidistributes in the support of the function, with respect to the
same asymptotic density as in Theorem \ref{thm: main,
cocompact}. Namely we have:

\begin{cor}\name{cor: applications}
Given a cocompact lattice $\Gamma$ in $G$ and $-1 < \alpha <1$, $0<
\theta\leq 1$, there is $\delta>0$ such 
that for any $\vu \in \PP$ and any compactly supported
$\theta$-H\"older function $f$ on $\PP$ there are positive $T_0$ and $C$ such that
for all $T>T_0$, 
\eq{main cor}{
\left |\frac1{T^{1+\alpha}} \sum_{\gamma \in \Gamma_T}
 f\left(\frac{\gamma \vu}{T^{\alpha}} \right) - \frac{2}{\mu(\gmg)} \int_{\PP}
\frac{f(\vv)}{\vv \star \vu}\, d\vv \right | < CT^{-\delta} .}
\end{cor}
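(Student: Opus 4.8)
The plan is to deduce Corollary~\ref{cor: applications} directly from Theorem~\ref{thm: main, cocompact} by applying the latter to the rescaled function $g=f_{T^\alpha}$, where $f_\rho(x)=f(x/\rho)$, with the \emph{same} initial point $\vu$, and then tracking how each quantity in \equ{main} transforms under $f\mapsto g$. First I would record the elementary scaling identities. Since $\supp g=T^\alpha\,\supp f$, we have $R(g)=T^\alpha R(f)$, $r(g)=T^\alpha r(f)$, and $v(g)=v(f)$. The substitution $\vv=T^\alpha\vw$ shows that each of the three terms in the H\"older norm \equ{Holder norm} is invariant under this dilation: $\sup|g|=\sup|f|$, while the weighted $L^2$-term and the H\"older seminorm are scale-invariant because the weights $|\vv|^{-2}d\vv$ and $|x|^\theta|x-y|^{-\theta}$ are; hence $\|g\|_\theta=\|f\|_\theta$. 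Because $\vv\star\vu$ is homogeneous of degree one in $\vv$ for fixed $\vu$, the same substitution gives $\int_\PP \frac{g(\vv)}{\vv\star\vu}\,d\vv=T^\alpha\int_\PP \frac{f(\vw)}{\vw\star\vu}\,d\vw$. Finally, from \equ{defn D0} and \equ{defn B} one gets $D_0(\vu,g)\le T^{|\alpha|}D_0(\vu,f)$ (each of the two terms in the maximum is multiplied by $T^{\pm\alpha}$) and $B(\vu,g)=T^{-\alpha\theta\delta_0}B(\vu,f)$.

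Next I would check that the hypothesis $T>T_0=cD_0(\vu,g)$ of Theorem~\ref{thm: main, cocompact} holds for all sufficiently large $T$: since $D_0(\vu,g)\le T^{|\alpha|}D_0(\vu,f)$ with $|\alpha|<1$, one has $cD_0(\vu,g)=o(T)$. Substituting $g$ and $\vu$ into \equ{main}, observing that $S_{g,\vu}(T)$ is exactly the orbit-sum appearing in \equ{main cor} and that the main term there equals $\frac{2T^{1+\alpha}}{\mu(\gmg)}\int_\PP \frac{f(\vw)}{\vw\star\vu}\,d\vw$, and dividing through by $T^{1+\alpha}$, the right-hand side becomes (after replacing $R(g)/|\vu|$ by $T^\alpha R(f)/|\vu|$ and collecting powers of $T$)
\[
C_\theta\,\|f\|_\theta\,\frac{R(f)}{|\vu|}\left( T^{|\alpha|-1}D_0(\vu,f)+B(\vu,f)\,T^{-(1+\alpha)\theta\delta_0}\right).
\]
Both exponents are negative: $|\alpha|-1<0$ since $|\alpha|<1$, and $-(1+\alpha)\theta\delta_0<0$ since $\alpha>-1$. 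Hence, taking $\delta$ to be any positive number not exceeding $\min\bigl(1-|\alpha|,\ (1+\alpha)\theta\delta_0\bigr)$ — which depends only on $\Gamma$, $\alpha$, $\theta$ — and absorbing the $T$-independent factor $C_\theta\|f\|_\theta\frac{R(f)}{|\vu|}\bigl(D_0(\vu,f)+B(\vu,f)\bigr)$ into a constant $C$, one obtains the bound $<CT^{-\delta}$ for all $T$ larger than a suitable $T_0=T_0(\vu,f)$.

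There is no genuine obstacle here: the corollary is a bookkeeping consequence of the effective estimate. The one point requiring care is that $\vu$ is held fixed while only $f$ is dilated, so this is \emph{not} the symmetric rescaling noted in the remark following Theorem~\ref{thm: main, cocompact}; consequently $D_0$, $B$ and the prefactor $R(f)/|\vu|$ all pick up powers of $T$, and one must verify — as above — that these powers are beaten by the linear factor $T$ in the main term. This is precisely where the two-sided condition $-1<\alpha<1$ enters, the upper bound controlling the growth of $D_0$ and $R(g)$ and the lower bound guaranteeing that the error exponent $(1+\alpha)\theta\delta_0$ stays positive.
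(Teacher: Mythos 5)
Your proposal is correct and is essentially the paper's own argument: the paper applies Theorem \ref{thm: main, cocompact} to $f$ with the rescaled initial point $\vw=\vu/T^{\alpha}$, whereas you apply it to the rescaled function $g(\vv)=f(\vv/T^{\alpha})$ with $\vu$ fixed, and by the scale-equivariance noted in the remark following the theorem these are the same computation. The bookkeeping (growth of $D_0$ and $B$, invariance of $\|\cdot\|_\theta$, scaling of the main term, verification of $T>cD_0$) matches the paper's proof and yields the identical exponent $\delta=\min\bigl(1-|\alpha|,\ (1+\alpha)\theta\delta_0\bigr)$.
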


\begin{remark}\name{remark 1}
\begin{enumerate}
\item
Adapting the arguments used in the proof of Corollary \ref{cor:
applications} one can show that for any continuous compactly supported
function $\phi$ on $\PP$ for which $\displaystyle{\frac{\phi(T)}{T}
\to 0}$ and $T \phi(T) \to \infty$, one has
$$
 \ \frac{S_{f_{\phi(T)}, \vu}}{T\phi(T)} \to \frac{2}{\mu\left(\gmg\right)}\int_{\PP}
\frac{f(\vv)}{\vv\star \vu} \, d\vv.$$
\item
The case $\alpha =1$, that is the asymptotic behavior of $\frac1{T^2}
S_{f_T, \vu}$ was studied
in \cite{M}. In this case the asymptotic density is
different. 
\end{enumerate}
\end{remark}

\begin{figure}[h]
\subfloat[$T=100$]{
\epsfig{file=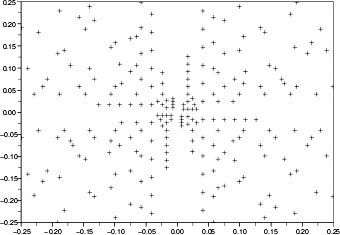,width=140.00pt,height=140.00pt}
}
\subfloat[$T=25$]{
\epsfig{file=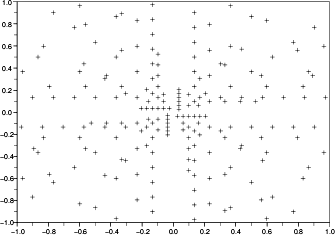,width=140.00pt,height=140.00pt}
}
\subfloat[$T=9$]{
\epsfig{file=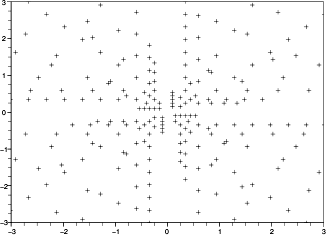,width=140.00pt,height=140.00pt}
}
\caption{The `scaling property'. For $\Gamma = \SL(2,\ZZ)$, parts of the
orbit of three different points are shown, at different scales and
with different values of $T$. Each contains approximately 200 points.}

\end{figure}
 
\ignore{
first three are drawn for $\Gamma=SL(2,{\bf Z})$, $\vu$ unit vector of
slope $\sqrt{2},\sqrt{3}$ and $e$ respectively, $\| \cdot \|$ the
$L^2$ norm, the last one for the cocompact lattice
$SL(1,D_{2,3}({\bf{Z}}))$. The first one is for $T_1=100$ at scale
$S_1=0.25$, the second one is for $T_2=25$ at scale $S_2=1$, and the
third for $T_3=9$ and $S_3=3$. The last one is for $T_4=100$, scale
$S_4=3$, vector $\vu=(1,0)$. They are very similar, as expected
because $T_1S_1=T_2S_2\simeq T_3S_3$; they contains approximately 200
points each. 
}

Theorem \ref{thm: main, cocompact} does not hold in the non-uniform
case. For example, there are discrete orbits for $\Gamma$ in the
plane, and these certainly  will not satisfy \equ{eq: main} if $\supp f$
does not intersect the orbit; consequently, for a
fixed $T$ the conclusion of Corollary \ref{cor: applications} will also fail for all
$\vu$ sufficiently close to a point with a discrete orbit. The
behavior of the orbit will depend in a subtle way on the diophantine
properties of the slope of the initial vector; to make this precise,
we will need a bit of notation. 

Let $z \in [0,1)$, and denote $z=[0;a_1,a_2,\ldots]$ its continued
fraction expansion, $p_k,q_k$ its convergents. Let $t_k=- \log \left|
z- \frac{p_k}{q_k} \right|$; the theory of continued fractions (see
\cite{HW}) tells us that $(t_k)_k$ is an increasing sequence, and that 
 \eq{eq: tk bounds}{
 a_{k+1}q_k^2  \leq e^{t_k} \leq (a_{k+1}+2)q_k^2. 
 }
 Define, in the case where $z$ is irrational
 \[ 
  \hat{\xi}(z,\tau_1,\tau_2) = \max \left( a_k \, : \, \tau_1
\leq t_{k+1}, \,  t_{k-1}\leq \tau_2 \right)  
 \]
(if the set on the right hand side is empty we set $\hat{\xi}(z,
\tau_1, \tau_2) = e^{\tau_2}$).
 If $z$ is rational, the sequences
$(a_k)$, $(q_k)$ and $(t_k)$ are finite. Let $k_0$ be the
length, that is $z=p_{k_0}/q_{k_0}$. If $\tau_2\leq t_{k_0-1}$, $
\hat{\xi}(z,\tau_1,\tau_2)$ is defined by the preceding formula; if
not,  
 \[ 
  \hat{\xi}(z,\tau_1,\tau_2) = \min \left(e^{\tau_2}, \max \left( \max \{a_k \, :
\, \tau_1 \leq t_{k+1} \} , e^{\tau_2}/q_{k_0}^2\right)
\right) . 
\]

 If $\vu= \left[ \begin{array}{c} \vu_x \\ \vu_y \end{array} \right]
\in \PP$, denote by $z$ the unique real number in the set $[0,1] \cap
\{ \vu_x/\vu_y, \vu_x/\vu_y +1, -\vu_y/\vu_x, -\vu_y/\vu_x+1 \}$. We
define $\hat{\xi}(\vu,\tau_1,\tau_2)=\hat{\xi}(z,\tau_1,\tau_2)$.  

\begin{thm}
\name{thm: main, SL(2,Z)}
For $\Gamma=\SL(2,\ZZ)$ there are positive constants $c$ and
$\delta_0$ such that for any $\theta \in (0,1]$, there exists
$C>0$ such that the following holds. For any $\vu \in \PP$ 
and for any compactly supported $\theta$-H\"older map $f: \PP \to
\RR$, let $D_0$ and $B$ be as in \equ{eq: defn
D0} and \equ{eq: defn B}, and let 
$$
\hat{\xi}_{f,T,\vu}= \hat{\xi}\left( \vu,\log \left( \frac{T
|\vu|}{R(f)} \right), \log \left(\frac{T |\vu|}{r(f)}\right) \right). 
$$
Then for any $T>c D_0$ such that
\eq{eq: choice of T1}{
T \geq c \ \frac{| \vu | \hat{\xi}_{f,T,\vu} }{R(f) }
}
one has
 \eq{eq: mainfv2}{
\left|S_{f,\vu}(T)- \frac{2T}{\mu(\gmg)} \int_{\PP}
\frac{f(\vv)}{\vv \star \vu}\, d\vv \right| \leq C
\|f\|_{\theta} \frac{R(f)}{|\vu|} \left( D_0+B T^{1-\theta \delta_0}
\hat{\xi}_{f,T,\vu}^{\theta \delta_0} \right). 
}
\end{thm}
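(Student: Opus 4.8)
The plan is to reduce the lattice sum $S_{f,\vu}(T)$ to an integral over $\gmg$ of a horocycle-type orbit, exactly as in the Ledrappier method used for Theorem \ref{thm: main, cocompact}, and then replace the single effective equidistribution input (available unconditionally in the cocompact case) by an effective equidistribution statement for pieces of horocycle that is sensitive to how deep the horocycle goes into the cusp of $\SL(2,\Z)\backslash G$. Concretely, one parametrizes $\Gamma \vu$ by writing $\gamma \vu$ in polar-type coordinates: the radial part is controlled by the condition $\|\gamma\| \le T$ and the angular part is governed by a horocycle. The sum over $\gamma \in \Gamma_T$ becomes, after the usual unfolding, an integral of $f$ against the pushforward under the geodesic flow of a horocycle arc of length $\asymp T^2 |\vu|^2/R(f)^2$ (or the analogous quantity with $r(f)$), based at the point of $\gmg$ determined by $\vu$. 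The asymptotic main term $\frac{2T}{\mu(\gmg)}\int_\PP \frac{f(\vv)}{\vv\star\vu}\,d\vv$ is precisely what one gets by replacing that horocycle arc by the Haar measure $\mu$; so the error is controlled by the discrepancy of the horocycle arc.

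The second step is to insert the effective equidistribution results of Burger, Str\"ombergsson, Forni and Flaminio for horocycle arcs on $\gmg$. For $\SL(2,\Z)$ the rate of equidistribution of a horocycle segment degrades when the base point is high in the cusp, or equivalently when the segment, before it has equidistributed, spends a long time near the cusp; the amount of time lost is measured exactly by the continued-fraction data of the slope $z$ of $\vu$ through the quantities $t_k$, $q_k$, $a_k$ in \equ{eq: tk bounds}. The combinatorial function $\hat\xi(\vu,\tau_1,\tau_2)$ is designed so that $\hat\xi_{f,T,\vu}$ records the worst partial quotient relevant to horocycle excursions occurring on the time-scale window $[\log(T|\vu|/R(f)),\log(T|\vu|/r(f))]$, which is exactly the range of geodesic-flow times appearing in Step 1 once the support of $f$ between radii $r(f)$ and $R(f)$ is taken into account. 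Feeding the BSFF bounds through the unfolding of Step 1, with the excursion penalty expressed via $\hat\xi_{f,T,\vu}$, produces an error term of the shape $\|f\|_\theta \frac{R(f)}{|\vu|}(D_0 + B\,T^{1-\theta\delta_0}\hat\xi_{f,T,\vu}^{\theta\delta_0})$; the hypothesis \equ{eq: choice of T1} is precisely the requirement that $T$ be large enough that the horocycle arc is genuinely longer than the excursion it must survive, so that equidistribution has had a chance to take effect. The H\"older norm $\|f\|_\theta$ and the exponent $\theta\delta_0$ enter through the standard smoothing/mollification argument needed to pass from test functions on $\gmg$ to the possibly-merely-H\"older function $f$ on $\PP$; the factor $D_0$ absorbs boundary effects at the two ends of the radial range.

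The main obstacle I expect is Step 2: one must track, quantitatively and uniformly, how the implied constant in effective horocycle equidistribution depends on the height of the base point in the cusp, and then translate that height — and the length of the cuspidal excursion of the relevant horocycle piece — into the continued-fraction quantities $(t_k,q_k,a_k)$ of $z$, so that the bound is clean enough to be stated in terms of the single combinatorial quantity $\hat\xi_{f,T,\vu}$. This requires the explicit Fourier-theoretic or representation-theoretic form of the BSFF estimates (not just their qualitative content), a careful case analysis matching the definition of $\hat\xi$ (including the rational case, where the orbit may be discrete and the excursion never returns), and a verification that \equ{eq: choice of T1} is exactly the threshold separating the regime where the error estimate is meaningful from the regime where the horocycle arc is too short. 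Everything else — the unfolding in Step 1, the mollification, and the bookkeeping of the factors $D_0$, $B$ — is essentially the same as in the cocompact case, hence routine modulo the cusp analysis.
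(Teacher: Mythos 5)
Your outline is essentially the paper's own proof: the Ledrappier-type unfolding through the section $\Psi$ and the radial partition of unity, followed by the Str\"ombergsson/Flaminio--Forni effective equidistribution bound whose error carries the cusp-excursion factor $\xi(x,\log S)$, converted into the continued-fraction quantity $\hat{\xi}_{f,T,\vu}$ by the hyperbolic-geometry lemma of Section 7, with \equ{choice of T1} serving exactly the role you describe (it keeps the excursion penalty below the arc length, equivalently makes the smoothing parameter $\alpha\ge 1$). The only slip is cosmetic: the horocycle arcs that arise have length $\asymp T R(f)/|\vu|$, with geodesic times running over $\left[\log\left(T|\vu|/R(f)\right),\,\log\left(T|\vu|/r(f)\right)\right]$, not $T^2|\vu|^2/R(f)^2$, and this does not affect the strategy.
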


\begin{remark}
\begin{enumerate}\item
Examining our argument one sees it is possible to take $\delta_0 =
1/48$ in Theorem \ref{thm: main, SL(2,Z)}.
\item
Our arguments prove an analogous result 
for any lattice $\Gamma$ in place of $\SL(2,\ZZ)$. In this case, the quantity 
$\hat{\xi}(\vu,\tau_1,\tau_2)$ is replaced by the supremum of the
distance between a fixed reference point in
$\gmg$ and $\Psi(\vu/|\vu|)a_s$, where $s \in [\tau_1, \tau_2]$ --- see
\S\S7-8. Also,
the $\delta_0$ can be taken to be $\delta_\Gamma/24$, 
where $\delta_\Gamma$ is defined as explained in Remark \ref{remark
1}(1). 
\end{enumerate}
\end{remark}
 
For $\beta>0$, say $z \in \RR$ is $\beta$-diophantine if there is
$c>0$ such that $|z-p/q|\geq
c q^{-\beta}$ for all $p,q \in
\ZZ$. 
Note that quadratic irrationals are
$2$-diophantine, and by Roth's theorem, all algebraic numbers are
$2+\vre$-diophantine for any $\vre>0$, like Lebesgue almost any
real number. 
In
the following, we extend the definition to the case $\beta=+\infty$
with the convention that every number (even rationals) is
$\infty$-diophantine.
It is well-known (see Lemma \ref{lem:
betadiophantine}) that when $z$ is $\beta$-diophantine,
$\hat{\xi}_{z, \tau_1, \tau_2} $ can be bounded in terms of
$\beta$. This yields:
\begin{cor}
\name{cor: main, SL(2,Z)}
Given $\beta \in [2,+\infty]$, $\theta \in (0,1]$ and
$-\frac{1}{\beta-1} < \alpha <1$, there is $\delta>0$ such 
that for any $\vu \in \PP$  a vector with a $\beta$-diophantine slope and any compactly supported
$\theta$-H\"older function $f$ on $\PP$ there are positive $T_0$ and $C$ such that
for all $T>T_0$, 
\eq{main cor1}{
\left |\frac1{T^{1+\alpha}} \sum_{\gamma \in \Gamma_T}
 f\left(\frac{\gamma \vu}{T^{\alpha}} \right) - \frac{2}{\mu(\gmg)} \int_{\PP}
\frac{f(\vv)}{\vv \star \vu}\, d\vv \right | < CT^{-\delta} .}
\end{cor}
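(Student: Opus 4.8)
The plan is to reduce Corollary~\ref{cor: main, SL(2,Z)} to Theorem~\ref{thm: main, SL(2,Z)} by the same rescaling trick that turns Theorem~\ref{thm: main, cocompact} into Corollary~\ref{cor: applications}, the one new ingredient being the bound on $\hat{\xi}$ for $\beta$-diophantine slopes furnished by Lemma~\ref{lem: betadiophantine}. Fix a vector $\vu$ whose slope is $\beta$-diophantine and a compactly supported $\theta$-H\"older function $f$. For each $T>0$ set $g=g_T:=f_{T^{\alpha}}$, so that $g(x)=f(x/T^{\alpha})$ and $\sum_{\gamma\in\Gamma_T}f(\gamma\vu/T^{\alpha})=S_{g,\vu}(T)$. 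Since $\vv\star\vu$ is homogeneous of degree one in $\vv$, the substitution $\vv\mapsto T^{\alpha}\vv$ gives $\frac{2T}{\mu(\gmg)}\int_{\PP}\frac{g(\vv)}{\vv\star\vu}\,d\vv=\frac{2T^{1+\alpha}}{\mu(\gmg)}\int_{\PP}\frac{f(\vv)}{\vv\star\vu}\,d\vv$, so the left-hand side of \equ{main cor1} is precisely $T^{-(1+\alpha)}$ times the error term occurring in \equ{eq: mainfv2} applied to the pair $(g,\vu)$.

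The second step is to record how the quantities entering \equ{eq: mainfv2} depend on $T$ through $g$. The $\theta$-H\"older norm \equ{eq: Holder norm} was designed to be scale-invariant, so $\|g\|_{\theta}=\|f\|_{\theta}$ and $v(g)=v(f)$; moreover $R(g)/|\vu|=T^{\alpha}R(f)/|\vu|$, while from \equ{eq: defn D0} and \equ{eq: defn B} one gets $D_0(\vu,g)=\max\bigl(T^{\alpha}R(f)/|\vu|,\,|\vu|T^{-\alpha}/r(f)\bigr)\asymp T^{|\alpha|}$ and $B(\vu,g)\asymp T^{-\alpha\theta\delta_0}$, the implied constants depending only on $\vu,f,\theta$. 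Finally $\hat{\xi}_{g,T,\vu}=\hat{\xi}\bigl(\vu,\log(T^{1-\alpha}|\vu|/R(f)),\log(T^{1-\alpha}|\vu|/r(f))\bigr)$; since $\alpha<1$, both arguments tend to $+\infty$ as $T\to\infty$, their difference staying equal to $\log v(f)$.

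Next I would invoke Lemma~\ref{lem: betadiophantine}: a $\beta$-diophantine slope forces $\hat{\xi}(\vu,\tau_1,\tau_2)\ll e^{(1-2/\beta)\tau_2}$ once $\tau_2$ is large (with the convention $1-2/\infty=1$, which also covers $\beta=\infty$, where only the trivial bound $\hat{\xi}\ll e^{\tau_2}$ is available). Hence $\hat{\xi}_{g,T,\vu}\ll T^{(1-\alpha)(1-2/\beta)}$. An elementary computation shows that $(1-\alpha)(1-2/\beta)<1+\alpha$ holds if and only if $\alpha(\beta-1)>-1$, i.e.\ precisely when $\alpha>-\tfrac{1}{\beta-1}$; so under the hypothesis of the corollary both requirements $T>cD_0(\vu,g)$ (which reads $T\gg T^{|\alpha|}$, true for large $T$ as $|\alpha|<1$) and \equ{eq: choice of T1} for $(g,T)$ (which reads $T^{1+\alpha}\gg T^{(1-\alpha)(1-2/\beta)}$) hold once $T$ exceeds some $T_0=T_0(\vu,f,\theta,\alpha,\beta)$. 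Substituting into \equ{eq: mainfv2} and dividing by $T^{1+\alpha}$ would then bound the left-hand side of \equ{main cor1} by a constant (depending on $\vu,f,\theta$) times $T^{|\alpha|-1}+T^{\theta\delta_0((1-\alpha)(1-2/\beta)-(1+\alpha))}$; both exponents are strictly negative --- the first because $|\alpha|<1$, the second by the equivalence just noted --- so the bound has the form $CT^{-\delta}$ for any positive $\delta$ below $\min\{1-|\alpha|,\ \theta\delta_0((1+\alpha)-(1-\alpha)(1-2/\beta))\}$, a quantity depending only on $\alpha,\beta,\theta$. This is the assertion of the corollary.

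The real content --- and the step I expect to be the main obstacle --- is the sharp form of Lemma~\ref{lem: betadiophantine}: from $|z-p_k/q_k|\geq cq_k^{-\beta}$ one must extract not the naive estimate $a_k\ll q_{k-1}^{\beta-2}$, which only yields $\hat{\xi}\ll e^{(\beta-2)\tau_2/2}$ and hence the strictly smaller range $\alpha>1-4/\beta$, but the stronger bound $a_k\ll e^{(1-2/\beta)t_{k-1}}$, obtained by noting that the diophantine inequality forces $q_{k-1}\gg e^{t_{k-1}/\beta}$ and combining this with $a_k\asymp e^{t_{k-1}}/q_{k-1}^2$. Some additional care is also needed for the finite/rational branch of the definition of $\hat{\xi}$, and for the (harmless) dependence of the implied constants on $\vu$ and $f$, which is permitted since $T_0$ and $C$ in the corollary may depend on $\vu,f$ while $\delta$ may not.
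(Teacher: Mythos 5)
Your proposal is correct and follows essentially the same route as the paper: reduce to Theorem \ref{thm: main, SL(2,Z)} via the $T^{\alpha}$-rescaling and control $\hat{\xi}$ by Lemma \ref{lem: betadiophantine}, arriving at the same exponents and the same $\delta=\min\bigl(1-|\alpha|,\,2\theta\delta_0(\alpha+(1-\alpha)/\beta)\bigr)$. The only (cosmetic) difference is that you rescale the function, $g=f_{T^{\alpha}}$, while the paper rescales the initial vector, $\vw=\vu/T^{\alpha}$; by the scale invariance noted after Theorem \ref{thm: main, cocompact} and the identity $S_{f_{T^{\alpha}},\vu}(T)=S_{f,\vu/T^{\alpha}}(T)$, the two computations coincide.
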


\begin{remark} In a recent and independent work \cite{no2}, Nogueira proved a similar result, with a better estimate of the error term, but in the particular case in which $f$ is the characteristic function of a square, and the norm $||\cdot||$ is the supremum norm, generalizing his previous results \cite{no}. The method used is completly different from ours.
\end{remark}

Applying Theorem \ref{thm: main, SL(2,Z)} to the `shrinking target problem', we obtain:
\begin{cor}
\name{cor : Dirichlet}
Let $\delta_0$ be as in Theorem \ref{thm: main, SL(2,Z)}, and let
$\vv, \vu
\in \PP$. Then:
\begin{enumerate}
\item
If $\vu$ has 
$\beta$-diophantine slope, then there are positive constants $C$ and
$T_0$ such that for all $T \geq 
T_0$ there is $\gamma \in \Gamma_T$ such that 
 $$|\gamma \vu -\vv| < CT^{-\frac{2\delta_0}{3\beta}}.$$

\item
If the slope of $\vu$ is irrational then there is a positive constant
$C$ such that there are
infinitely many $\gamma \in \Gamma$ solving 
 $$|\gamma \vu -\vv| < C||\gamma||^{-\frac{\delta_0}{3}}.$$
\end{enumerate}
\end{cor}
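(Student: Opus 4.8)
The plan is to deduce both statements from Theorem~\ref{thm: main, SL(2,Z)} applied to a smooth bump concentrated near $\vv$, at a well-chosen scale $T$, and for (2) also to subtract two nearby scales so that the lattice point produced has large norm. Fix once and for all a nonnegative $\phi\in C^\infty(\PP)$ with $\phi(0)=1$ and $\supp\phi\subset B(0,1/2)$, and for small $\varepsilon>0$ put $f_\varepsilon(\vx)=\phi\big((\vx-\vv)/\varepsilon\big)$; this is $\theta$-H\"older with $\theta=1$, which we use. As $\varepsilon\to0$ one has $r(f_\varepsilon),R(f_\varepsilon)\to|\vv|$, hence $v(f_\varepsilon)\to1$, $D_0(\vu,f_\varepsilon)\to\max(|\vv|/|\vu|,|\vu|/|\vv|)$ and $B(\vu,f_\varepsilon)$ stays bounded; moreover $\|f_\varepsilon\|_1\asymp\varepsilon^{-1}$ (the H\"older seminorm dominates), and since $\vv\star\vu>0$ and $\vv'\star\vu\approx\vv\star\vu$ on $\supp f_\varepsilon$, the main term $\frac{2T}{\mu(\gmg)}\int_\PP \frac{f_\varepsilon(\vv')}{\vv'\star\vu}\,d\vv'\asymp T\varepsilon^2$, all implied constants depending only on $\vu,\vv,\phi$ and $\Gamma$. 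Thus, whenever $T$ is large enough for Theorem~\ref{thm: main, SL(2,Z)} to apply ($T>cD_0$ and $T\geq c|\vu|\hat\xi_{f_\varepsilon,T,\vu}/R(f_\varepsilon)$),
\[
S_{f_\varepsilon,\vu}(T)\ \geq\ c_1 T\varepsilon^2-c_2\,\varepsilon^{-1}\big(D_0+T^{1-\delta_0}\,\hat\xi_{f_\varepsilon,T,\vu}^{\,\delta_0}\big),
\]
and if the right side is positive there is $\gamma\in\Gamma_T$ with $f_\varepsilon(\gamma\vu)>0$, i.e. $|\gamma\vu-\vv|<\varepsilon$.

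For part (1), the $\beta$-diophantine hypothesis on the slope of $\vu$ gives, via Lemma~\ref{lem: betadiophantine} applied with $\tau_1,\tau_2\approx\log(T|\vu|/|\vv|)$, a bound $\hat\xi_{f_\varepsilon,T,\vu}\leq c_3 T^{1-2/\beta}$ for $T$ large and $\varepsilon$ small; in particular the applicability conditions hold for all large $T$. The error term is then $\leq c_4\,\varepsilon^{-1}T^{1-2\delta_0/\beta}$, so with $\varepsilon=K\,T^{-2\delta_0/(3\beta)}$ the main term $c_1K^2T^{1-4\delta_0/(3\beta)}$ exceeds the error $\leq c_4K^{-1}T^{1-4\delta_0/(3\beta)}$ once $K$ is large (depending on $\vu,\vv,\phi,\beta,\Gamma$). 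Hence for every $T\geq T_0$ there is $\gamma\in\Gamma_T$ with $|\gamma\vu-\vv|<K\,T^{-2\delta_0/(3\beta)}$, which is (1) with $C=K$ (when $\beta=\infty$ the exponent is $0$ and the claim is vacuous).

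For part (2) the slope $z$ of $\vu$ is merely irrational, so $\hat\xi_{f_\varepsilon,T,\vu}$ need not be a power of $T$; the point is that it can be kept \emph{bounded} along a suitable sequence of scales. Indeed $\hat\xi_{f_\varepsilon,T,\vu}$ measures the depth of the excursion of the geodesic orbit $s\mapsto\Psi(\vu/|\vu|)a_s$ near time $\log(T|\vu|/|\vv|)$ (equivalently the sizes of the relevant partial quotients of $z$), and, the slope being irrational, this orbit is non-divergent and so returns to a fixed compact set $K\subset\gmg$ along times $s_n\to\infty$, with return windows of a uniform positive length. Choosing $T=T_n\to\infty$ with $\log(T_n|\vu|/|\vv|)$ at the centre of such a window gives $\hat\xi_{f_\varepsilon,T_n,\vu}\leq M$ for some $M=M(K)$ and all small $\varepsilon$ (the defining window of $\hat\xi_{f_{\varepsilon_n},T_n,\vu}$ shrinks to a point, so fits inside the return window). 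Fix $\lambda>1$ with $\log\lambda$ below the window length; apply the displayed inequality at $T_n$ and at $T_n'=\lambda^{-1}T_n$ (both legitimate for $n$ large) and subtract:
\[
\sum_{T_n'<\|\gamma\|\leq T_n}f_\varepsilon(\gamma\vu)\ =\ S_{f_\varepsilon,\vu}(T_n)-S_{f_\varepsilon,\vu}(T_n')\ \geq\ c_5\,(1-\lambda^{-1})\,T_n\varepsilon^2-c_6\,\varepsilon^{-1}T_n^{1-\delta_0}M^{\delta_0}.
\]
Choosing $\varepsilon=\varepsilon_n=C'\,T_n^{-\delta_0/3}$ with $C'$ large makes this positive, so there is $\gamma=\gamma_n$ with $T_n'<\|\gamma_n\|\leq T_n$ and $|\gamma_n\vu-\vv|<\varepsilon_n=C'T_n^{-\delta_0/3}\leq C'\|\gamma_n\|^{-\delta_0/3}$; as $\|\gamma_n\|>\lambda^{-1}T_n\to\infty$ the $\gamma_n$ are pairwise distinct, yielding infinitely many solutions of $|\gamma\vu-\vv|<C'\|\gamma\|^{-\delta_0/3}$, which is (2).

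The routine parts are the estimates on $D_0$, $B$, $\|f_\varepsilon\|_\theta$ and the main term, and the arithmetic of optimizing $\varepsilon$ against the explicit error of Theorem~\ref{thm: main, SL(2,Z)}. The main obstacle is the control of $\hat\xi_{f_\varepsilon,T,\vu}$: for (1) it is the clean diophantine estimate of Lemma~\ref{lem: betadiophantine}, while for (2) one must turn the qualitative non-divergence of $\Psi(\vu/|\vu|)a_s$ into a precise statement that $\hat\xi$ stays bounded along infinitely many scales $T_n\to\infty$, and check that the (vanishing) window attached to $f_{\varepsilon_n}$ at scale $T_n$ can be placed inside a return window; once this is in hand, the subtraction trick is what converts the count into infinitely many \emph{distinct} $\gamma$ with the norm appearing on the correct side of the inequality.
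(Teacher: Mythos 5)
Your part (1) is correct and is essentially the paper's own argument: a Lipschitz bump at scale $\varepsilon$ around $\vv$, the bounds $\|f_\varepsilon\|_1\ll\varepsilon^{-1}$, main term $\asymp T\varepsilon^2$, the error controlled via Lemma \ref{lem: betadiophantine}, and the optimization $\varepsilon\asymp T^{-2\delta_0/(3\beta)}$ all match the paper's proof.

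Part (2), however, contains a genuine gap: the claim that choosing $T_n$ so that $\log(T_n|\vu|/|\vv|)$ lies in a return window of the geodesic $s\mapsto\pi(\Psi(\vu/|\vu|)a_s)$ to a compact set forces $\hat{\xi}_{f_\varepsilon,T_n,\vu}\leq M$ is false. By definition, $\hat{\xi}(z,\tau_1,\tau_2)$ is the maximum of all partial quotients $a_k$ whose ``influence interval'' $[t_{k-1},t_{k+1}]$ merely meets $[\tau_1,\tau_2]$; a huge $a_k$ whose excursion peaks at $t_{k-1}$ well before $\tau_1$ still contributes, because $t_{k+1}\geq\tau_1$ and $t_{k-1}\leq\tau_2$ can both hold while the geodesic sits deep inside a compact set throughout the (short) window. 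Concretely, for any $\tau\in(t_{k-1},t_k)$ one has $\hat{\xi}(z,\tau,\tau)\geq a_k$, so for any irrational $z$ with $a_k\to\infty$ (e.g.\ $z=[0;1,2,3,\dots]$) the quantity $\hat{\xi}$ tends to infinity along \emph{every} sequence of scales, even though the geodesic is nondivergent and returns to a fixed compact set infinitely often. The comparison lemma of \S 7 only gives $\xi\ll\hat{\xi}$, not the converse, so nondivergence bounds the geometric quantity $\xi$ but not the arithmetic quantity $\hat{\xi}$; and since obtaining the exponent $\delta_0/3$ requires $\hat{\xi}$ bounded along the chosen scales, statement (2) for \emph{all} irrational slopes cannot be extracted from the black-box form \equ{eq: mainfv2} of Theorem \ref{thm: main, SL(2,Z)}. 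The paper repairs exactly this point by observing that the proof of the theorem actually yields the error term with $\xi_{f,T,\vu}$ in place of $\hat{\xi}_{f,T,\vu}$, and $\xi$ \emph{is} uniformly bounded at the return times $s_i$, which is what makes the optimization go through. (A minor remark: your subtraction trick is not needed --- the lower bound for $S_{f_{\varepsilon_n},\vu}(T_n)$ tends to infinity while $\sup f_{\varepsilon_n}$ is bounded, so the number of distinct $\gamma$ already grows, and since $\|\gamma\|\leq T_n$ gives $\|\gamma\|^{-\delta_0/3}\geq T_n^{-\delta_0/3}$ the norm appears on the correct side automatically.)
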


\subsection{Notation}
Throughout this paper the Vinogradov symbol $A \ll B$ means that there
is a constant $C$ such that $A \leq CB$, where $A$ and $B$ are expressions
depending on various quantities and the {\em implicit constant} $C$ is
independent of these quantities. In particular, throughout the paper
the implicit constant may depend on $\Gamma$, on the choice of
the norm $\| \cdot \|$, on auxilliary functions $\Psi, \, \phi$, but {\em not on the
function $f$ nor the initial 
point $\vu$.} The notation $A \asymp B$ means that $A \ll B$ and $B\ll A$.

\section{The norm estimate}
\subsection{The setup}
 Let $G$ act on $\RR^2$ by matrix multiplication on the left.
 Define the following matrices
 $$h_s=\left[ \begin{array}{cc}
      1 & s \\
      0 & 1 \\
     \end{array} \right],
     \;
      a_t=\left[ \begin{array}{cc}
      e^{t/2} & 0 \\
      0 & e^{-t/2} \\
     \end{array} \right], 
\;
 r_\theta=\left[ \begin{array}{cc}
      \cos \theta & -\sin \theta \\
     \sin \theta & \cos \theta \\
     \end{array} \right]
.
     $$

 The stabilizer of  $\vu_0=\left[ \begin{array}{c} 1 \\ 0
\end{array}\right]$ is precisely the unipotent subgroup 
  $$ H=\{ h_s  \, : \, s \in \RR \},  $$
so that $\PP$ is identified with the quotient $G/H$ via
the map 
$gH \mapsto g\vu_0$. Let $\Gamma$ be a lattice in $G$, and let $\tau: G
\to G/H$ and $\pi: G \to \gmg$ be the natural quotient maps. 
\[
\xymatrix{
& G \ar[dl]_{\pi} \ar[dr]^{\tau}  \\
\gmg & & G/H}
\]

We 
define a haar measure $\lambda$ on $H$ by $d\lambda(h_s) = ds$. 

\subsection{The section}
Define $\Psi : \PP \rightarrow G$ by

\eq{eq: defn Psi}{
 \Psi\left(\left[ \begin{array}{c} x \\ y \end{array}\right]\right)=
  \left[
     \begin{array}{cc}
      x & -y/(x^2+y^2) \\
      y & x/(x^2+y^2) \\
     \end{array}
    \right],
}

The function $\Psi$ is a section in the sense
that $\tau \circ \Psi = \mathrm{Id}|_{G/H}$, i.e. for all $\vu\in \PP,$ 
\eq{equ1}{
   \Psi(\vu)\vu_0=\vu.}
The following equation is easily verified:
\eq{equ4}{
    \vv \star \vu= \Biggl\| \Psi(\vv) \left[ \begin{array}{cc}
      0 & 1 \\
      0 & 0 \\
     \end{array} \right] \Psi(\vu)^{-1} \Biggr\|.
}
Note that \equ{equ4}
does not depend on the choice of the section $\Psi$ (as might not be
obvious from the formula).
It can be also checked that for any $t\in \RR$ and $\vv\in \PP$, we have
\eq{equ2}{
    \Psi(e^t \vv)=\Psi(\vv) a_{2t}.
}

Define
\eq{eq: defn D}{
D=D(\vu, f) = \sup_{x \in \supp \, f} \|\Psi(x)\Psi(\vu)^{-1}\|,
}
this quantity satisfies
\eq{eq: sameD}{
D(\vu,f) \asymp D_0(\vu,f), 
}
where $D_0$ is as in \equ{eq: defn D0}. 
Indeed, let $x \in \supp \, f$, then 
$$\|\Psi(x)\Psi(\vu)^{-1}\|=\left\| \Psi\left( \frac{x}{|x|}\right) a_{2\log(|x|/|\vu|)}\Psi\left( \frac{\vu}{|\vu|}\right)^{-1} \right\| $$
$$\asymp \| a_{2\log(|x|/|\vu|)} \| \asymp \max \left( \frac{|x|}{|\vu|}, \frac{|\vu|}{|x|} \right).$$

\subsection{The cocycle}
Let $\vu\in \PP$ and $g \in G$, define $c_{\vu}(g)$
by the following implicit equation: 
\eq{equ3}{
\left[
\begin{array}{cc}
      1 & c_{\vu}(g) \\
      0 & 1 \\
     \end{array}
 \right]
= \Psi(g \vu)^{-1}g \Psi(\vu).
}
This makes sense because the right hand side stabilizes $\vu_0$, so is in
$H$. It is easily checked that $c$ is a cocycle, meaning it satisfies
for any $g_1,g_2 \in G$ and $\vu\in \PP$, 
$$c_{\vu}(g_1g_2)=c_{g_2 \vu}(g_1)+c_{\vu}(g_2).$$

One also sees that in terms of Iwasawa decomposition, we have
\eq{eq: Iwasawa}{
g = kan \ \ \implies \ \ ka = \Psi(g \vu_0),\ n=h_s \ \mathrm{where} \ s
= c_{\vu_0}(g).
}
Therefore we can write haar measure $\mu$ on
$G$ by the formula 
\eq{eq: haar normalization}{
d\mu(g) = d\tau(g)\, d\lambda(c_{\vu_0}(g)).
}
Note that the normalization of Lebesgue measure on $\RR^2$ 
and $\lambda$ determine
a normalization for $\mu$. 
Changing the section $\Psi$ gives rise to a homologous
cocycle, so that $\mu$ is actually independent of $\Psi$. 
It follows from \equ{eq:
Iwasawa} that
\eq{equ5}{
    c_{\vu_0}(gh_s)=c_{\vu_0}(g)+s,
\ \ \ \ 
    c_{\vu_0}(ga_t)=e^{-t} c_{\vu_0}(g),
}
and from \equ{equ1} that
\eq{eq: from section}{
c_{\vu}(g)=c_{\vu_0}(g \Psi(\vu)).
}

\begin{lem}\name{lemme1 revisited}
Let $D= D(\vu, f)$ be as in \equ{eq: defn D}. 
For any $f \in C_c(\PP)$,  any $\vu \in \PP$ and any $g \in
G$ for which $g \vu \in \supp \, f$ we have 
\eq{eq: uniformity revisited}{
  \Bigl| \|g\| - |c_{\vu}(g)| \, (g \vu \star \vu) \Bigr|
\leq D.
}
\end{lem}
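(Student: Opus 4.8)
The plan is to work out both quantities $\|g\|$ and $|c_{\vu}(g)|\,(g\vu\star\vu)$ explicitly by reducing to the identity section and using the relations already established, particularly \equ{equ4} and \equ{eq: from section}. First I would reduce to the base point: writing $g\vu=\vv$ with $\vv\in\supp f$, set $P=\Psi(\vu)$, $Q=\Psi(\vv)$, so that by \equ{equ3} we have $Q^{-1}gP = h_{c}$ with $c=c_{\vu}(g)$, hence $g = Q\,h_c\,P^{-1}$. Thus $\|g\| = \|Q h_c P^{-1}\|$. The point is to compare this with $\|Q h_\infty P^{-1}\|$ where $h_\infty = \left[\begin{smallmatrix} 0 & 1 \\ 0 & 0\end{smallmatrix}\right]$ is the ``direction'' of $h_c$ as $c\to\infty$; note $h_c = \mathrm{Id} + c\,h_\infty$, so $g = QP^{-1} + c\,Q h_\infty P^{-1}$.

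From this identity, the triangle inequality gives
\[
\bigl|\,\|g\| - |c|\,\|Q h_\infty P^{-1}\|\,\bigr| \le \|Q P^{-1}\| = \|\Psi(\vv)\Psi(\vu)^{-1}\| \le D,
\]
using the definition \equ{eq: defn D} of $D$ and that $\vv\in\supp f$. It then remains to identify $\|Q h_\infty P^{-1}\|$ with the quantity $g\vu\star\vu = \vv\star\vu$. But this is exactly \equ{equ4}: by definition $\vv\star\vu = \|\Psi(\vv)\left[\begin{smallmatrix}0&1\\0&0\end{smallmatrix}\right]\Psi(\vu)^{-1}\| = \|Q h_\infty P^{-1}\|$. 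Substituting yields precisely \equ{eq: uniformity revisited}.

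The only genuine subtlety is making sure the decomposition $h_c = \mathrm{Id} + c\,h_\infty$ is being used correctly inside the matrix norm, i.e. that $g = QP^{-1} + c\,(Q h_\infty P^{-1})$ as an honest identity in $\M_2(\RR)$ — this is immediate since matrix multiplication distributes over the sum $h_c = \mathrm{Id}+ c h_\infty$. Given that, the triangle inequality in the normed vector space $\M_2(\RR)$, applied to $\|A + cB\|$ with $A = QP^{-1}$ and $B = Qh_\infty P^{-1}$, gives $\bigl|\,\|A+cB\| - |c|\,\|B\|\,\bigr|\le \|A\|$, which is the assertion. I do not expect any real obstacle here; the lemma is essentially a bookkeeping consequence of the cocycle identity \equ{equ3} together with \equ{equ4} and the definition of $D$, so the ``hard part'' is merely checking that $D$ as defined in \equ{eq: defn D} is indeed the correct bound for $\|\Psi(g\vu)\Psi(\vu)^{-1}\|$ when $g\vu\in\supp f$, which is true by definition.
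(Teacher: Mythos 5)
Your proof is correct and is essentially identical to the paper's argument: both rewrite \equ{equ3} as $g = \Psi(g\vu)\Psi(\vu)^{-1} + c_{\vu}(g)\,\Psi(g\vu)\left[\begin{smallmatrix}0&1\\0&0\end{smallmatrix}\right]\Psi(\vu)^{-1}$, identify the second term's norm via \equ{equ4}, and conclude by the reverse triangle inequality together with the definition \equ{eq: defn D} of $D$. Nothing further is needed.
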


\begin{proof}
By \equ{equ3}, we have
\[
\begin{split}
g & = \Psi(g\vu)\left( \mathrm{Id} + \left [\begin{matrix} 0 & c_{\vu}(g)
\\ 0 & 0 
\end{matrix} \right ] \right )\Psi(\vu)^{-1} \\
& = \Psi(g\vu) \Psi(\vu)^{-1} +
c_{\vu}(g) \Psi(g\vu) \left[\begin{matrix} 0 & 1 \\ 0 & 0  \end{matrix}
\right] \Psi(\vu)^{-1}.
\end{split}
\]

By \equ{equ4},
$$\left\|c_{\vu}(g) \Psi(g\vu) \left[\begin{matrix} 0 & 1 \\ 0 & 0  \end{matrix}
\right] \Psi(\vu)^{-1} \right \| = |c_{\vu}(g)| \,  (g \vu \star \vu).$$
The claim follows. 
\end{proof}

\subsection{Some useful inequalities}

 Here we state and prove elementary inequalities that will be useful
later. The first remark is that the $\star$-product is
well-approximated by the
product of the norms:  
\eq{eq: starproduct}{
 \vv \star \vu \asymp |\vv| \, |\vu|.
}
Indeed,
$$\vv \star \vu=\Biggl\| \left[
     \begin{array}{cc}
      -u_2 v_1 & u_1v_1 \\
      -u_2 v_2 & u_1v_2 \\
     \end{array}
    \right]\Biggr\| \asymp \max_{i,j=1,2} \{ |u_i v_j |\} =
\max_{i=1,2} \{ | u_i | \}  \max_{j=1,2} \{ | v_j | \} .$$

The following upper bound will also prove helpful: for any
$\theta$-H\"older compactly supported $f: \PP \to \R$ 
\eq{eq: intnosolarge}{
\left| \int_{\PP} \frac{f(\vv)}{\vv \star \vu} \, d\vv \right| \ll \|f\|_{\theta} \frac{R(f)}{|\vu|}. 
}
This is proved as follows:
$$\left| \int_{\PP} \frac{f(\vv)}{\vv \star \vu} \, d\vv \right| \stackrel{\equ{eq: starproduct}}{\ll}
\left| \int_{\supp \, f} \frac{f(\vv)}{|\vv| \, |\vu|} \, d\vv \right| \ll |\vu|^{-1} \left( \int_\PP f^2(\vv) |\vv|^{-2} \, d\vv  \right)^{1/2} \left( \int_{\supp \, f} d\vv \right)^{1/2},$$
by the Cauchy-Schwarz inequality, and since $\supp \, f$ is in a disk
of radius approximately $R(f)$, this implies \equ{eq: intnosolarge}.

\section{From the plane to the homogeneous space}
In this section we pass from a function $f: \PP \to \RR$
to a function $\bar{f}
: \gmg \to \RR$. 
This is done in two steps: lifting to 
a function $\til f$ using the section $\Psi$ and a bump function; and
summing along $\Gamma$-orbits to obtain a function $\bar{f}$ on
$\gmg$.

Assume $f$ is compactly supported and non-negative.
Fix $\phi : \RR \rightarrow \RR$ a nonnegative $C^\infty$ function,
vanishing outside $[-1,1]$, such that $\int_\RR \phi(t)dt =1$.
Set 
\eq{eq: defn til f}{
\til f
: G \to \RR, \ \ \til f(g) = f(\tau(g))
\phi
(c_{\vu_0}(g))
}
(a compactly supported smooth function on $G$) and 
\eq{eq: defn bar f}{
\bar{f}
(x)= \sum_{g \in \pi^{-1}(x)} \til f
(g)
}
(a finite sum for each $x$). The normalization \equ{eq: haar
normalization} for $\mu$ 
ensures that 
\eq{eq: normalization again}{
\int_{\gmg}
\bar{f} \, d\mu = \int_{\RR^2} f(x)\, dx.
}

The distribution of the cloud $\Gamma_T$ turns out to be linked with
the norm $\vv \mapsto \vv \star \vu$, and for this reason we will have
to work with the more precise measures of the support of $f$: 

\[r^{(\vu)}(f)  = \inf_{\vv \in \supp \, f } \vv \star \vu \asymp r(f) \, |\vu|, \ \ 
\ \ \ \ \  
R^{(\vu)}(f)=\sup_{\vv \in \supp \, f } 
\vv\star \vu \asymp R(f) \, |\vu|,
\]
\[   v^{(\vu)}(f)=
\frac{R^{(\vu)}(f)}{r^{(\vu)}(f)} \asymp v(f). \\ 
\]

\begin{lem} \name{lem: boundary effect again}
Let $\vu \in \PP,\, \til \vu = \Psi(\vu), \, \gamma \in \Gamma, \, f
\in C_c(\PP)$ and let 
$\til f$ be as
in \equ{eq: defn til f}. 
Let $D=D(\vu, f)$ be as in \equ{eq: defn D}.

Then 
\eq{eq: for boundary effect}{
\begin{split}
r\leq
r^{(\vu)}(f), \ \| \gamma\|\leq T & \ \implies \ 
\int_{-\left(1+(T+D)/r\right)}^{1+(T+D)/r}  \til f
(\gamma \til \vu h_s)\, ds = f(\gamma \vu), \\
R^{(\vu)}(f) \leq R, \ \|\gamma \| \geq T & \ \implies \   \int_{-\left((T-D)/R -1 
\right)}^{(T-D)/R-1}  \til f
(\gamma \til \vu h_s)\, ds=0.
\end{split}
}
\end{lem}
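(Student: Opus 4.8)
The plan is to reduce everything to a single pointwise identity for the integrand and then feed in the norm estimate of Lemma \ref{lemme1 revisited}. First I would compute $\til f(\gamma\til\vu h_s)$ explicitly. Since $\til\vu=\Psi(\vu)$ and $h_s$ stabilizes $\vu_0$, \equ{equ1} gives $\tau(\gamma\til\vu h_s)=\gamma\Psi(\vu)\vu_0=\gamma\vu$; and combining the first identity of \equ{equ5} with \equ{eq: from section} gives $c_{\vu_0}(\gamma\til\vu h_s)=c_{\vu_0}(\gamma\Psi(\vu))+s=c_{\vu}(\gamma)+s$. Substituting into \equ{eq: defn til f} yields
\[
\til f(\gamma\til\vu h_s)=f(\gamma\vu)\,\phi\bigl(c_{\vu}(\gamma)+s\bigr)\qquad(s\in\RR).
\]
If $\gamma\vu\notin\supp\,f$ the right-hand side vanishes identically and both assertions are trivial, so I may assume $\gamma\vu\in\supp\,f$; then Lemma \ref{lemme1 revisited} applies to $g=\gamma$, and $r^{(\vu)}(f)\le\gamma\vu\star\vu\le R^{(\vu)}(f)$ by the definitions of these quantities.

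For the first assertion I would integrate the identity above and change variables, obtaining
\[
\int_{-(1+(T+D)/r)}^{\,1+(T+D)/r}\til f(\gamma\til\vu h_s)\,ds
= f(\gamma\vu)\int_{c_{\vu}(\gamma)-1-(T+D)/r}^{\,c_{\vu}(\gamma)+1+(T+D)/r}\phi(t)\,dt .
\]
Because $\phi$ is supported in $[-1,1]$ and has total mass $1$, this equals $f(\gamma\vu)$ as soon as $[-1,1]$ is contained in the interval of integration, i.e. as soon as $|c_{\vu}(\gamma)|\le(T+D)/r$. But Lemma \ref{lemme1 revisited} together with $\|\gamma\|\le T$ gives $|c_{\vu}(\gamma)|\,(\gamma\vu\star\vu)\le\|\gamma\|+D\le T+D$, and dividing by $\gamma\vu\star\vu\ge r^{(\vu)}(f)\ge r$ finishes it.

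For the second assertion the same two inputs run in the opposite direction: $|c_{\vu}(\gamma)|\,(\gamma\vu\star\vu)\ge\|\gamma\|-D\ge T-D$ together with $\gamma\vu\star\vu\le R^{(\vu)}(f)\le R$ gives $|c_{\vu}(\gamma)|\ge(T-D)/R$. Then for every $s$ with $|s|\le(T-D)/R-1$ one has $|c_{\vu}(\gamma)+s|\ge(T-D)/R-\bigl((T-D)/R-1\bigr)=1$, so $\phi(c_{\vu}(\gamma)+s)=0$ and hence $\til f(\gamma\til\vu h_s)=0$ by the displayed identity; integrating over that range of $s$ gives $0$ (the range being empty, and the statement vacuous, when $(T-D)/R<1$).

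I do not expect a real obstacle here: the argument is bookkeeping of the cocycle identities \equ{equ1}, \equ{equ5}, \equ{eq: from section} feeding into \equ{eq: defn til f}, followed by two applications of Lemma \ref{lemme1 revisited}. The only points needing care are using the estimate of that lemma in the correct direction in each case — as a lower bound on $\gamma\vu\star\vu$ (with $r\le r^{(\vu)}(f)$) for the "capture" statement, and as an upper bound (with $R^{(\vu)}(f)\le R$) for the "vanishing" statement — and remembering that $\phi$ vanishes at the endpoints $\pm1$.
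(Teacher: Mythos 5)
Your proposal is correct and follows essentially the same route as the paper: rewrite $\til f(\gamma\til\vu h_s)=f(\gamma\vu)\,\phi(c_{\vu}(\gamma)+s)$ via \equ{equ1}, \equ{equ5}, \equ{eq: from section}, reduce each claim to the bound $|c_{\vu}(\gamma)|\leq (T+D)/r$ (resp. $|c_{\vu}(\gamma)|\geq (T-D)/R$), and obtain these from Lemma \ref{lemme1 revisited} together with the bounds $r\leq \gamma\vu\star\vu\leq R$ on the support of $f$. The paper merely leaves the second implication as ``similar'' and the trivial case $\gamma\vu\notin\supp f$ implicit, both of which you spell out.
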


\begin{proof}
Since $\int \phi(s) ds =1$ and $\supp \, \phi \subset [-1,1]$, for the
first claim it suffices to show that for 
$\| \gamma \|\leq T
$ and $\gamma \vu \in \supp \, f$ we have 
$$[-1,1] \subset \left\{c_{\vu_0}(\gamma \til \vu h_s): |s| \leq 
1+(T+D)/r \right\},$$
or by \equ{equ5}, that 
$$|c_{\vu_0}(\gamma \til \vu)| \leq \frac{T+ D}{r}.$$
This follows from \equ{eq: from section} and \equ{eq: uniformity
revisited}. The proof of the second claim is similar.  
\end{proof}
 
We will need to control the H\"older norm of $\bar{f}$ in terms
of that of $f$. 
\ignore{
We begin with some general remarks
concerning Sobolev and H\"older norms. Let $q \in \Z_+$ and $p \geq 1$. Suppose $M$
is a $d$-dimensional manifold equipped with an absolutely continuous 
smooth measure $m$. A {\em ($C^q$-) framing} of $M$ is a choice of $d$ ($C^q$-)
smooth  
vector fields $X_1, \ldots, X_d$, which are linearly independent at
each point. By the corresponding Sobolev $p,q$
norm on the space of compactly supported smooth functions on $M$, we
mean 
$$\|f\|_{p,q} = \sum_{|I| \leq q} \left(\int \left|X^I f\right|^p \, dm
\right)^{1/p},$$ 
where the sum ranges over multi-indices $I=(i_1, \ldots, i_d)$, $|I| =
i_1 + \cdots + i_d$, and $X^I = X_1^{i_1} \circ \cdots \circ
X_d^{i_d}$, $X^k_i$ denotes $k$ applications of $X_i$
to a $C^q$ function $f$. This definition depends on the compact set,
and on the order chosen for the $X_i$.  
 Let $X_1,X_2,X_3$ be a basis of the Lie algebra of $G$. Risking
confusion, we use the same letter to denote the framing of $G$
obtained by left-transporting this basis, i.e.  
 $$X_i f (g) = \left.\frac{d}{dt}\right|_{t=0} f \left(g\exp(tX_i) \right).$$ 
 This defines our Sobolev norms on $G$. By their definition, the
vector fields $X_i$ descend to vector fields $\bar{X}_i$ on the
quotient $\gmg$, which defines in turn a Sobolev norm on $\gmg$. Let
$d$ be a Riemannian distance on $G$ which is left-$G$-invariant, it
defines in turn a distance on $\gmg$. 
}
For $\theta \in (0,1]$, define a
H\"older norm on compactly supported function $f$ on $G$ or $\gmg$: 
 $$|| f ||_\theta = \sup_{\dist(x,y)\leq1} \frac{|f(x)-f(y)|}{\dist(x,y)^\theta}.$$ 
Here by dist we denote a left-invariant Riemannian metric on $G$, or
the corresponding metric induced on $\gmg$. 
\begin{lem} \name{lem:holdercontrol}
For any $\sigma > 1$ and any $\theta \in (0,1]$ there is a constant
$c=c_{\sigma, \theta}>0$ and a compact set $K_\sigma\subset
\gmg$ such that for any $\theta$-H\"older compactly supported function
$f$ with
$$\supp \, f \subset A_\sigma=\{\vw \in \PP \,: \, \sigma^{-1} \leq |\vw| \leq \sigma \},$$
 we have $\supp \, \bar{f} \subset K_\sigma$ and
 
\eq{eq: holdercontrol}{
\|\bar{f} \|_{\theta} \leq  c \| f\|_{\theta}.
}
 
\end{lem}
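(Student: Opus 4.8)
The plan is to reduce the H\"older estimate on $\bar f$ to a H\"older estimate on $\til f$ together with a uniform bound on the number of $\Gamma$-translates contributing to $\bar f$ near any point of its support. First I would establish the support statement: if $\supp\,f\subset A_\sigma$ then by \equ{equ2} the section $\Psi(\vw)$ for $\vw\in A_\sigma$ stays in a fixed compact subset of $G$ (it is $\Psi(\vw/|\vw|)a_{2\log|\vw|}$ with $|\log|\vw||\le\log\sigma$ and $\Psi$ restricted to the unit circle having compact image), so by \equ{eq: defn til f} the function $\til f$ is supported in a fixed compact set $\til K_\sigma\subset G$ depending only on $\sigma$ (using also $\supp\,\phi\subset[-1,1]$). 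Projecting, $\supp\,\bar f\subset K_\sigma=\pi(\til K_\sigma)$, a fixed compact subset of $\gmg$.

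Next I would bound the H\"older norm. The key point is local finiteness: since $\Gamma$ is discrete and $\til K_\sigma$ is compact, there is an integer $N=N_\sigma$ such that for every $g\in G$ the set $\{\gamma\in\Gamma:\gamma g\in \til K_\sigma\}$ has at most $N$ elements; equivalently, in a small enough fixed neighborhood $\mathcal O$ of any point of $\gmg$, the sum defining $\bar f$ has at most $N$ nonzero terms, each of the form $\til f(\gamma g)$ with $g$ ranging over a local section of $\pi$ over $\mathcal O$. For $x,y\in\gmg$ with $\dist(x,y)$ small one lifts to $g_x,g_y\in G$ with $\dist(g_x,g_y)=\dist(x,y)$ (possible since $\pi$ is a local isometry), and writes $\bar f(x)-\bar f(y)=\sum_{i=1}^{N}\bigl(\til f(\gamma_i g_x)-\til f(\gamma_i g_y)\bigr)$; since the metric is left-invariant, $\dist(\gamma_i g_x,\gamma_i g_y)=\dist(g_x,g_y)$, so this is bounded by $N\,\|\til f\|_\theta\,\dist(x,y)^\theta$, giving $\|\bar f\|_\theta\le N\,\|\til f\|_\theta$.

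It then remains to bound $\|\til f\|_\theta$ (and $\sup|\til f|$, which is needed since the $\theta$-H\"older norm $\|\cdot\|_\theta$ on $\PP$ from \equ{eq: Holder norm} includes a sup term and an $L^2$ term) by $\|f\|_\theta$. Here I would use \equ{eq: defn til f}: $\til f(g)=f(\tau(g))\phi(c_{\vu_0}(g))$ is a product of two functions, the first being $f$ composed with the smooth submersion $\tau$ and the second a fixed smooth compactly supported function $\phi\circ c_{\vu_0}$. On the fixed compact set $\til K_\sigma$, $\tau$ is Lipschitz and $c_{\vu_0}$ is smooth, and the H\"older exponent in \equ{eq: Holder norm} is measured with the weight $|x|^\theta/|x-y|^\theta$ which on $A_\sigma$ is comparable to $1/\dist(\cdot,\cdot)^\theta$ up to constants depending on $\sigma$; so the H\"older constant of $f\circ\tau$ on $\til K_\sigma$ is $\ll_\sigma\|f\|_\theta$, and multiplying by the fixed smooth bounded factor $\phi\circ c_{\vu_0}$ (using the Leibniz-type inequality for H\"older norms of products, with the sup-norm bound $\sup|f|\le\|f\|_\theta$) gives $\|\til f\|_\theta\ll_\sigma\|f\|_\theta$. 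Combining, $\|\bar f\|_\theta\le c_{\sigma,\theta}\|f\|_\theta$.

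The main obstacle is the bookkeeping in the last step: matching the somewhat unusual weighted H\"older seminorm of \equ{eq: Holder norm} on $\PP$ (with its $|x|^\theta$ weighting and the restriction $0<|x-y|\le|x|/2$) against the plain left-invariant Riemannian H\"older norm on $G$, and checking that on the compact annulus $A_\sigma$ these are equivalent up to $\sigma$-dependent constants, including near the inner boundary where the weight matters; one also has to be slightly careful that $\tau$ restricted to the relevant compact set is genuinely bi-Lipschitz onto its image in $\PP\cong G/H$ so that H\"older regularity transfers in the right direction. Everything else — the support statement and the local-finiteness counting argument — is routine given discreteness of $\Gamma$ and left-invariance of the metric.
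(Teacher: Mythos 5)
Your proposal is correct and follows essentially the same route as the paper: factor through $\til f$, bound $\|\til f\|_\theta\ll_\sigma\|f\|_\theta$ using the Lipschitz behavior of $\tau$ and $\phi\circ c_{\vu_0}$ on the fixed compact set $\til K_\sigma=\tau^{-1}(A_\sigma)\cap c_{\vu_0}^{-1}([-1,1])$, then pass to $\bar f$ via the uniform bound on $\#\bigl(\til K_\sigma\cap\pi^{-1}(x)\bigr)$ coming from discreteness of $\Gamma$, with $K_\sigma=\pi(\til K_\sigma)$. Your extra care in matching the weighted H\"older seminorm of \equ{eq: Holder norm} with the Riemannian one on the annulus $A_\sigma$ is a detail the paper leaves implicit, but it is the same argument.
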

\begin{proof}
 We first prove that for some constant $c_1>0$, $$\|\til{f} \|_{\theta} \leq  c_1 \| f\|_{\theta}.$$ 
 Note that since $\til f(g) = f(\tau(g)) \phi (c_{\vu_0}(g))$, the
support of $\til f$ is contained in the compact set
$\til K_\sigma=\tau^{-1}(A_\sigma) \cap c_{\vu_0}^{-1}([-1,1])$, and that
$\tau$, $\phi\circ c_{\vu_0}$ are Lipschitz when restricted to
$K_\sigma$, so 
   $$\|\til{f} \|_{\theta} \ll \| f\|_{\theta}.$$ 
 Also note that $\# \, \til K_\sigma \cap \pi^{-1}(x)$ is bounded
independently of $x$ by compactness of $\til K_\sigma$, by a bound
depending on $\sigma$ only. Thus, 
   $$\|\bar{f} \|_{\theta} \ll \| \til{f} \|_{\theta}.$$   
   We put $K_\sigma=\pi(\til K_\sigma)$. 
 \end{proof}


\section{Radial Partition of unity}
We will use a partition of unity to
reduce to the case when $\supp \, f$ is contained 
in a narrow annulus around zero. Let $\kappa$ be the `tent' map

$$\kappa(x)=\begin{cases} 0 & \text{if $x\leq-1$ or $x\geq 1$}, \\
x+1 & \text{if} \ -1\leq x \leq 0,\\
1-x & \text{if} \ 0\leq x \leq 1,\\
\end{cases}
$$
which is a $1$-Lipschitz map and satisfies for all $x$
\eq{eq: partition}{
\sum_{\ell \in \Z} \kappa (x+\ell) =1.
}

Now given a parameter $\alpha \geq 1$, for any H\"older function $f$
on $\PP$ we define for $\ell \in \Z$ 
$$f_\ell(\vv) = f_{\ell}^{(\alpha)}(\vv) = f(e^{-\ell/\alpha} \vv)
\cdot \kappa \left( \alpha \log \frac{  \vv \star \vu}{|\vu |} \right),$$
so that for all $\vv \in \PP$
\eq{eq: f as sum}{
f(\vv) = \sum_{\ell \in \Z} f_{\ell}(e^{\ell /\alpha}\vv).
}

 If $f_\ell$ is not identically zero, there exists $\vv \in \PP$ in its support. Then we have
 $e^{-\ell/\alpha} (\vv \star \vu) \in [r^{(\vu)}(f), R^{(\vu)}(f)]$
and  $\vv \star \vu\in [e^{-1/\alpha}|\vu|,e^{1/\alpha}|\vu|]$, so 
\eq{eq: ellbounds}{
e^{-1/\alpha} \frac{r^{(\vu)}(f)}{|\vu|} \leq e^{-\ell/\alpha} \leq e^{1/\alpha} \frac{R^{(\vu)}(f)}{|\vu|} 
}
Note that this implies that the number of
nonzero summands in 
\equ{eq: f as sum} is at most $\alpha \log v^{(\vu)}(f)+2.$ 

 The properties of the maps $f_\ell$ are summarized in the following Lemma.

\begin{lem} \name{lem:fixsigma}
1) For all $\ell$, 
\eq{eq: thin support}
{e^{-1/\alpha}|\vu|\leq r^{(\vu)}(f_\ell)\leq R^{(\vu)}(f_\ell) \leq e^{1/\alpha}|\vu|.
}
2) There exists $\sigma>1$ such that for all $\alpha \geq 1$ and all $\ell$, 
$$\supp \, f_\ell  \subset A_\sigma.$$
3) 
\eq{eq: nodistortion}{ 
D(e^{\ell/\alpha}\vu,f_\ell)\leq D(\vu,f),
}
4)
 \eq{eq: estimateofholder}{
\| f_\ell \|_{\theta}  \ll  \alpha \|f\|_{\theta}.
}

5) Let $r_{\ell}=e^{(\ell-1)/\alpha}|u| \leq
r^{(e^{\ell/\alpha}\vu)}(f_{\ell}),\ R_{\ell} =
e^{(\ell+1)/\alpha}|\vu| \geq R^{(e^{\ell/\alpha}\vu)}(f_{\ell}).$
Then  
\eq{eq: intup}{
\sum_{\ell} r_\ell^{-1}\int_\PP f_\ell \leq e^{2/\alpha} \int_\PP
\frac{f(\vv)}{\vv \star \vu} d\vv,  
}
and
\eq{eq: intlow}{
  \sum_{\ell} R_\ell^{-1}\int_\PP f_\ell \geq e^{-2/\alpha} \int_\PP
\frac{f(\vv)}{\vv \star \vu} d\vv. 
}

\end{lem}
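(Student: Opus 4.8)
The plan is to verify the five assertions in turn; each follows from the defining formula $f_\ell(\vv)=f(e^{-\ell/\alpha}\vv)\,\kappa\bigl(\alpha\log\frac{\vv\star\vu}{|\vu|}\bigr)$ together with elementary properties of $\kappa$, $\Psi$ and the $\star$-product established above. For (1): if $\vv\in\supp\,f_\ell$ then $\kappa\bigl(\alpha\log\frac{\vv\star\vu}{|\vu|}\bigr)\ne0$, hence $\bigl|\alpha\log\frac{\vv\star\vu}{|\vu|}\bigr|\le1$, that is $e^{-1/\alpha}|\vu|\le\vv\star\vu\le e^{1/\alpha}|\vu|$; taking the infimum and the supremum over $\supp\,f_\ell$ gives \equ{eq: thin support}. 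For (2), combine this with $\vv\star\vu\asymp|\vv|\,|\vu|$ (which is \equ{eq: starproduct}) and with $\alpha\ge1$, concluding that $|\vv|\asymp1$ on $\supp\,f_\ell$ with implicit constants depending only on $\|\cdot\|$; hence $\supp\,f_\ell\subset A_\sigma$ for some $\sigma>1$ independent of $f,\vu,\alpha,\ell$. For (3), observe $\supp\,f_\ell\subseteq e^{\ell/\alpha}\supp\,f$, so every $x\in\supp\,f_\ell$ has the form $x=e^{\ell/\alpha}x'$ with $x'\in\supp\,f$; by \equ{equ2}, $\Psi(x)=\Psi(x')a_{2\ell/\alpha}$ and $\Psi(e^{\ell/\alpha}\vu)=\Psi(\vu)a_{2\ell/\alpha}$, so $\Psi(x)\Psi(e^{\ell/\alpha}\vu)^{-1}=\Psi(x')\Psi(\vu)^{-1}$, and taking norms and the supremum over $x\in\supp\,f_\ell$ yields \equ{eq: nodistortion}.

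The only assertion requiring genuine work is (4). Write $\kappa_\vv=\kappa\bigl(\alpha\log\frac{\vv\star\vu}{|\vu|}\bigr)\in[0,1]$. Then $\sup|f_\ell|\le\sup|f|$, and the change of variable $\vw=e^{-\ell/\alpha}\vv$ (whose Jacobian exactly cancels the rescaling of $|\vv|^{-2}$) gives $\int_\PP f_\ell(\vv)^2|\vv|^{-2}\,d\vv\le\int_\PP f(\vv)^2|\vv|^{-2}\,d\vv$, so these two contributions to $\|f_\ell\|_\theta$ are $\le\|f\|_\theta$. For the H\"older seminorm, fix $x,y\in\PP$ with $0<|x-y|\le|x|/2$ and split
\[
f_\ell(x)-f_\ell(y)=\bigl(f(e^{-\ell/\alpha}x)-f(e^{-\ell/\alpha}y)\bigr)\kappa_x+f(e^{-\ell/\alpha}y)\,(\kappa_x-\kappa_y).
\]
Putting $x'=e^{-\ell/\alpha}x$ and $y'=e^{-\ell/\alpha}y$, the relation $|x-y|\le|x|/2$ is scale invariant, so $|x'-y'|\le|x'|/2$ and the H\"older estimate in $\|f\|_\theta$ gives $\frac{|x|^\theta}{|x-y|^\theta}|f(x')-f(y')|=\frac{|x'|^\theta}{|x'-y'|^\theta}|f(x')-f(y')|\le\|f\|_\theta$; since $|\kappa_x|\le1$ this controls the first term. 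For the second term, $|f(e^{-\ell/\alpha}y)|\le\sup|f|\le\|f\|_\theta$, and, $\kappa$ being $1$-Lipschitz, $|\kappa_x-\kappa_y|\le\alpha\,\bigl|\log(x\star\vu)-\log(y\star\vu)\bigr|$. Since $\vv\mapsto\vv\star\vu$ is a norm on $\RR^2$ comparable to $|\vv|\,|\vu|$ by \equ{eq: starproduct}, one has $|x\star\vu-y\star\vu|\le(x-y)\star\vu\ll|x-y|\,|\vu|$ and $\min(x\star\vu,y\star\vu)\asymp\min(|x|,|y|)\,|\vu|\ge\tfrac12|x|\,|\vu|$ (using $|y|\ge|x|-|x-y|\ge|x|/2$), so by the mean value theorem $\bigl|\log(x\star\vu)-\log(y\star\vu)\bigr|\le\frac{|x\star\vu-y\star\vu|}{\min(x\star\vu,y\star\vu)}\ll\frac{|x-y|}{|x|}$. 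Hence $\frac{|x|^\theta}{|x-y|^\theta}|\kappa_x-\kappa_y|\ll\alpha\bigl(\frac{|x-y|}{|x|}\bigr)^{1-\theta}\le\alpha$ (as $1-\theta\ge0$ and $\frac{|x-y|}{|x|}\le1$), and adding the two contributions proves \equ{eq: estimateofholder}.

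Finally, for (5): the homogeneity $\vw\star(e^{\ell/\alpha}\vu)=e^{\ell/\alpha}(\vw\star\vu)$ gives $r^{(e^{\ell/\alpha}\vu)}(f_\ell)=e^{\ell/\alpha}r^{(\vu)}(f_\ell)$ and likewise for $R$, so (1) yields $r_\ell\le r^{(e^{\ell/\alpha}\vu)}(f_\ell)$ and $R_\ell\ge R^{(e^{\ell/\alpha}\vu)}(f_\ell)$. Multiplying the partition identity \equ{eq: f as sum}, $f(\vv)=\sum_\ell f_\ell(e^{\ell/\alpha}\vv)$, by $(\vv\star\vu)^{-1}$, integrating, and substituting $\vw=e^{\ell/\alpha}\vv$ in the $\ell$-th integral ($d\vv=e^{-2\ell/\alpha}d\vw$, $\vv\star\vu=e^{-\ell/\alpha}(\vw\star\vu)$) produces the identity
\[
\int_\PP\frac{f(\vv)}{\vv\star\vu}\,d\vv=\sum_\ell e^{-\ell/\alpha}\int_\PP\frac{f_\ell(\vw)}{\vw\star\vu}\,d\vw.
\]
On $\supp\,f_\ell$, (1) gives $e^{-1/\alpha}|\vu|\le\vw\star\vu\le e^{1/\alpha}|\vu|$ and $f_\ell\ge0$, so, with $r_\ell^{-1}=e^{(1-\ell)/\alpha}/|\vu|$ and $R_\ell^{-1}=e^{-(1+\ell)/\alpha}/|\vu|$, one gets $R_\ell^{-1}\int_\PP f_\ell\le e^{-\ell/\alpha}\int_\PP\frac{f_\ell}{\vw\star\vu}\le r_\ell^{-1}\int_\PP f_\ell$; since $r_\ell^{-1}=e^{2/\alpha}R_\ell^{-1}$, summing over $\ell$ yields \equ{eq: intup} and \equ{eq: intlow}. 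I expect the only delicate point to be the H\"older estimate in (4): one must pull the factor $\alpha$ out of the oscillation of $\vv\mapsto\kappa\bigl(\alpha\log(\vv\star\vu/|\vu|)\bigr)$ and balance the logarithmic modulus of continuity of $\vv\mapsto\vv\star\vu$ against the exponent $\theta$; the remaining steps are routine bookkeeping.
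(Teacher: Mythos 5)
Your proof is correct and follows essentially the same route as the paper: parts (1)--(3) from the support of $\kappa$, the comparability $\vv\star\vu\asymp|\vv|\,|\vu|$ and the scaling relation \equ{equ2}; part (4) by the same splitting of $f_\ell(x)-f_\ell(y)$ with the bound $|\kappa_x-\kappa_y|\ll\alpha|x-y|/|x|$; and part (5) by the same change of variable $\vw=e^{\ell/\alpha}\vv$ combined with the partition identity. Your treatment is if anything slightly more explicit than the paper's (the mean-value argument for the logarithm and the verification of the inequalities defining $r_\ell,R_\ell$), and correctly notes the nonnegativity of $f$ needed in (5).
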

\begin{proof}
 The first property is a direct consequence of the fact that $\supp \, \kappa \subset [-1,1]$. 
 The second one follows easily from \equ{eq: thin support} and \equ{eq: starproduct}. 
 To prove the third statement, notice that the definition of
$f_\ell$ also implies that $\supp f_\ell \subset e^{\ell/\alpha} \supp
f$. Together with \equ{eq: defn D} and \equ{equ2}, this
gives the desired result.
 
 We now proceed to the proof of \equ{eq: estimateofholder}. The first
two summands in \equ{eq: Holder norm} for $f_\ell$ are clearly
controlled by $\|f\|_\theta$, so we need to show that
$$\sup_{0<|x-y|\leq |x|/2} \frac{ |x|^\theta
|f(x)-f(y)|}{|x-y|^\theta} \\ \|f\|_\theta.$$
Let
$\kappa_\alpha(x)=\kappa(\alpha \log \frac{x\star \vu}{|\vu|})$, and
let $x,y \in \PP$ such that $|x-y|\leq |x|/2$. Then  
  $$|\kappa_\alpha(x)-\kappa_\alpha(y)| \leq \alpha \left| \log
\left(\frac{x}{|x|} \star \frac{\vu}{|\vu|}\right)  
 - \log \left(\frac{y}{|x|} \star \frac{\vu}{|\vu|}\right) \right|,$$
 so, since $\log$ and $\star$ are Lipschitz function when restricted to compact sets,
 $$|\kappa_\alpha(x)-\kappa_\alpha(y)| \ll \alpha \frac{|x-y|}{|x|}.$$ 
We have
 $$|f_\ell(x)-f_\ell(y)| \leq |\kappa_\alpha
(x)(f(e^{-\ell/\alpha}x)-f(e^{-\ell/\alpha}y))| 
 + |(\kappa_\alpha (x)-\kappa_\alpha(y) )f(e^{-\ell/\alpha}y)|,$$
so that
 $$|f_\ell(x)-f_\ell(y)| \ll \frac{|x-y|^\theta}{|x|^\theta} || f
||_\theta + \alpha \frac{|x-y|}{|x|} || f ||_\theta $$ 
 and since  $\frac{|x-y|}{|x|}\leq \frac{|x-y|^\theta}{|x|^\theta}$ and $\alpha+1\ll \alpha$,
 this concludes the proof of \equ{eq: estimateofholder}.\\
 
 Let us prove \equ{eq: intup}. A change of variable $\vw=e^{-\ell/\alpha}\vv$ gives
 $$\sum_{\ell} r_\ell^{-1} \int_\PP f_\ell = \sum_{\ell} \int_\PP
\frac{e^{(1+\ell)/\alpha}}{|\vu|} f(\vw)\kappa\left(\alpha \log
\frac{\vw\star \vu}{|\vu|}+\ell \right) d\vw,$$ 
 but since $ \vw \star \vu \leq e^{(-\ell+1)/\alpha}|\vu|$ for every
$\vw \in e^{-\ell/\alpha}\supp f_\ell$ because 
of \equ{eq: thin support}, we have
$$\sum_{\ell} r_\ell^{-1} \int_\PP f_\ell \leq \int_\PP
\frac{e^{2/\alpha}}{\vw \star \vu} f(\vw)\sum_\ell\kappa\left(\alpha
\log \frac{\vw\star \vu}{|\vu|}+\ell \right) d\vw.$$ 
Using \equ{eq: partition} yields the required result. The proof of \equ{eq: intlow} is similar.
\end{proof}


\section{Effective equidistribution}
It was proved by Furstenberg that the horocycle flow on $\gmg$ is
uniquely ergodic when $\Gamma$ is cocompact, in particular every orbit
is uniformly distributed. We will need a strengthening due to Burger 
\cite[Thm. 2(C)]{bu}, which gives an
effective rate for the  convergence of ergodic averages. Denote by
$\|F\|_{p,q}$ the $L^p$-Sobolev norm on compactly supported
continuous functions involving all derivatives up to order $q$ (see
e.g. \cite{st} for definitions and some generalities concerning these
norms).

\begin{thm}[Burger]\name{thm: Burger}
For any cocompact lattice $\Gamma$ there are positive $\delta =
\delta_{\Gamma}$ and $c$ such that for any $S\geq 1$, any
$C^3$-map $F$ on $\gmg$ and any $x \in\gmg$,  
\[
\left| \frac{1}{2S} \int_{-S}^S F(x h_s) ds -
\frac1{\mu(\gmg)} \int_{\gmg} F\, d\mu
\right| \leq c \|F\|_{2,3} S^{-\delta}.
\]
\end{thm}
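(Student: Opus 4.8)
Since this statement is quoted from Burger's paper, in our text it suffices to invoke \cite[Thm.~2(C)]{bu} and to check that Burger's normalization of the Haar measure coincides with the one fixed above; so what follows is a sketch of how such an effective equidistribution result is proved rather than a new argument. The plan is spectral. One first reduces to the case $\int_{\gmg}F\,d\mu=0$ by replacing $F$ with $F_0:=F-\mu(\gmg)^{-1}\int_{\gmg}F\,d\mu$; this is harmless since $\mu(\gmg)^{-1}\big|\int_{\gmg}F\,d\mu\big|\ll\|F\|_{L^2}\ll\|F\|_{2,3}$, whence $\|F_0\|_{2,3}\ll\|F\|_{2,3}$. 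Next one decomposes the right regular representation of $G$ on $L^2_0(\gmg)$, the orthocomplement of the constant functions, into a Hilbert direct sum $\bigoplus_j\mathcal H_j$ of irreducible unitary representations; this sum is genuinely discrete because $\Gamma$ is cocompact, and no constituent is the trivial representation. Each $\mathcal H_j$ is either tempered or a complementary series representation with parameter $u_j\in(0,\tfrac12)$, and the hypothesis $\lambda_1>0$ (the spectral gap) says precisely that there are only finitely many complementary series constituents and that $u_\Gamma:=\sup_j u_j<\tfrac12$; this $u_\Gamma$, equivalently $\delta_\Gamma$, controls the final exponent.

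The core estimate concerns a single irreducible constituent. For an irreducible nontrivial unitary representation $(\pi,\mathcal H)$ of $G=\SL(2,\RR)$ and smooth vectors $v,w$ there is the matrix-coefficient bound $|\langle\pi(h_s)v,w\rangle|\ll(1+|s|)^{-1+2u(\pi)}\,(1+\log(2+|s|))\,\mathcal S(v)\,\mathcal S(w)$, where $u(\pi)\in[0,\tfrac12)$ is the complementary-series parameter (taken to be $0$ in the tempered case) and $\mathcal S$ is a fixed Sobolev norm using at most three derivatives in the Lie-algebra directions; for $\SL(2,\RR)$ one obtains this directly from the fact that, in its $KAK$ form, $h_s$ for $|s|$ large is near $a_t$ with $|t|\approx 2\log|s|$, combined with the Harish-Chandra bound for matrix coefficients along $A$. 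Integrating over $s\in[-S,S]$ and dividing by $2S$ produces a gain $\ll S^{-(1-2u(\pi))}\log S$, uniform over the constituents, hence $\ll S^{-\delta_\Gamma}$. To reassemble, one expands $F_0$ in the decomposition and observes that the three Lie-algebra derivatives hidden in $\mathcal S$ are a bounded combination of those already present in $\|\cdot\|_{2,3}$ and are compatible with the decomposition, so the relevant sums of $\mathcal S(F_0^{(j)})$ are $\ll\|F\|_{2,3}$; this is routine bookkeeping, to be carried out with uniform implied constants.

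The step I expect to be the genuine obstacle is that the conclusion is \emph{pointwise}, for every $x\in\gmg$, whereas the estimate above most naturally yields only the $L^2$-level bound $\big\|\tfrac1{2S}\int_{-S}^S\pi(h_s)F_0\,ds\big\|_{L^2}\ll S^{-\delta_\Gamma}\|F\|_{2,3}$. One cannot upgrade this by the usual trick of convolving $F_0$ with a bump of radius $\varepsilon$ and pairing, because the horocycle flow has no hyperbolic structure and $\Ad(h_{-s})$ distorts transverse distances by a factor $\asymp 1+s^2$ (a one-line computation on the Lie algebra of $G$), so for the long times $|s|\asymp S$ that dominate the average nearby orbits have already separated and the mollification error is not controlled. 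Burger deals with this by a finer analysis within each spectral constituent, bounding the longitudinal integral $\int_0^S(\pi(h_s)v)(x)\,ds$ pointwise by $S^{\,1-\delta_\Gamma}$ times a Sobolev quantity which, via the embedding $W^{3,2}\hookrightarrow C^1$ on the $3$-manifold $\gmg$, is dominated by $\|F\|_{2,3}$. Since our statement is verbatim \cite[Thm.~2(C)]{bu}, it is enough for us to quote it after checking the normalization.
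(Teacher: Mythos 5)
Your proposal takes essentially the same route as the paper: Theorem \ref{thm: Burger} is stated there as Burger's result and is not reproved, the text simply invoking \cite[Thm.~2(C)]{bu}, so quoting that theorem (with the normalization check) is exactly what is done. Your additional spectral sketch of Burger's argument is extra material the paper does not need, and nothing in it affects the correctness of the citation-based justification.
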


 It will be more convenient for us to work with H\"older norms, so we
will prefer the following Corollary: 
 
\begin{cor}\name{cor: Burger}
For any cocompact lattice $\Gamma$ there are positive
$\delta_{\Gamma}$ and $c$ such that for any $S\geq 1$, any 
$\theta$-H\"older map $F$ on $\gmg$ and any $x \in\gmg$,  
\eq{eq: Burger}{
\left| \frac{1}{2S} \int_{-S}^S F(x h_s) ds -
\frac1{\mu(\gmg)} \int_{\gmg} F\, d\mu
\right| \leq c \|F\|_{\theta} S^{-\theta \delta_\Gamma/7}.
}
\end{cor}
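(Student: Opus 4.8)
The plan is to deduce Corollary \ref{cor: Burger} from Theorem \ref{thm: Burger} by approximating a $\theta$-H\"older function $F$ by a smooth function $F_\vre$, obtained by convolution with a mollifier at scale $\vre$, and then optimizing over $\vre$. Concretely, I would fix a nonnegative bump function $\psi$ on $G$ supported in the $\vre$-ball of the identity with $\int \psi \, d\mu = 1$, and set $F_\vre = F * \psi$ (using the left-invariant structure, so that the convolution makes sense on $\gmg$). One then controls three quantities: the sup-distance $\|F - F_\vre\|_\infty$, the $L^2$-norm $\|F_\vre\|_{2,0}$, and the higher $L^2$-Sobolev norm $\|F_\vre\|_{2,3}$.

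First I would record the elementary mollification estimates. Since $F$ is $\theta$-H\"older, $|F(x) - F_\vre(x)| \le \|F\|_\theta \vre^\theta$ for all $x$, hence $\|F - F_\vre\|_\infty \ll \|F\|_\theta \vre^\theta$; this also shows $\left|\int_{\gmg} F \, d\mu - \int_{\gmg} F_\vre \, d\mu\right| \ll \|F\|_\theta \vre^\theta$ and $\left|\frac{1}{2S}\int_{-S}^S F(xh_s)ds - \frac{1}{2S}\int_{-S}^S F_\vre(xh_s) ds\right| \ll \|F\|_\theta \vre^\theta$. For the Sobolev norms of $F_\vre$, differentiating under the integral sign moves derivatives onto $\psi$, so $\|F_\vre\|_{2,3} \ll \|F\|_\infty \|\psi\|_{2,3} \ll \|F\|_\theta \, \vre^{-\kappa}$ for an explicit exponent $\kappa$ depending on the dimension of $G$ (here $\dim G = 3$, so one can take $\kappa = 3/2 + 3 = 9/2$, since each of the three derivatives costs a factor $\vre^{-1}$ and the $L^2$-normalization of the $\vre$-supported bump costs $\vre^{-3/2}$; the bound on the compact support set is uniform since the $\vre$-neighbourhood of a fixed compact set is contained in a slightly larger fixed compact set for $\vre \le 1$). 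In any case, $\|F_\vre\|_{2,3} \ll \|F\|_\theta \vre^{-9/2}$.

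Then I would apply Theorem \ref{thm: Burger} to $F_\vre$ and combine with the mollification error:
\[
\left| \frac{1}{2S} \int_{-S}^S F(x h_s) ds - \frac1{\mu(\gmg)} \int_{\gmg} F\, d\mu \right| \ll \|F\|_\theta \left( \vre^\theta + \vre^{-9/2} S^{-\delta_\Gamma} \right).
\]
Now one optimizes: choosing $\vre = S^{-\delta_\Gamma/(\theta + 9/2)}$ balances the two terms and gives an exponent $\theta \delta_\Gamma/(\theta + 9/2)$. Since $\theta \le 1$, we have $\theta + 9/2 \le 11/2 \le 7$ (indeed $11/2 < 7$), so $\theta\delta_\Gamma/(\theta + 9/2) \ge \theta \delta_\Gamma / 7$, which yields the claimed bound \equ{eq: Burger}. (One should also note that for $\vre \le 1$ the implicit compact set and constants are uniform, and that the case $S$ large is the only one of interest since for bounded $S$ the statement is trivial after adjusting $c$.)

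The only mildly delicate point — the main obstacle, such as it is — is bookkeeping the dependence of $\|\psi\|_{2,3}$ on $\vre$ correctly, i.e. getting the right power $\kappa$ of $\vre^{-1}$, since this determines whether the final exponent survives being bounded below by $\theta\delta_\Gamma/7$. As long as $\kappa \le 6$ (so that $\theta + \kappa \le 7$ when $\theta \le 1$), the stated constant $7$ works; the naive count $\kappa = 9/2$ is comfortably within this, so there is slack. Everything else is routine: the H\"older modulus of continuity controls the $L^\infty$ mollification error directly, and convolution with a fixed-shape rescaled bump is a standard smoothing operation on the Lie group $G$, descending to $\gmg$ because the metric and the bump are left-invariant.
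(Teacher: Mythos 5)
Your proposal is correct and follows essentially the same route as the paper: mollify $F$ by convolution with an $\vre$-scale bump, bound $\sup|F-F_\vre| \ll \|F\|_\theta\vre^\theta$ and $\|F_\vre\|_{2,3} \ll \|F\|_\theta\vre^{-\kappa}$, apply Theorem \ref{thm: Burger} to $F_\vre$, and optimize in $\vre$. The only (immaterial) difference is the bookkeeping of $\kappa$ — the paper uses the cruder count $\kappa = 6$ (dimension plus number of derivatives) and chooses $\vre = S^{-\delta_\Gamma/(\theta+6)}$, while you get $\kappa = 9/2$; both satisfy $\theta + \kappa \le 7$, so the stated exponent $\theta\delta_\Gamma/7$ follows either way.
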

\begin{proof}
 By convolution of $F$ with a function of support in a ball of small
radius $\vre$ and $i$-th derivatives bounded by a multiple of
$\vre^{-i-3}$, one can approximate the $\theta$-H\"older map $F$
by a smooth map $F_\vre$ such that 
 $$\sup | F_\vre -F  | \leq \|F\|_\theta \vre^\theta,$$
 and
 $$\|F_\vre\|_{2,3}\ll \|F\|_\theta \vre^{-6}.$$
 The exponent $6$ here corresponds to 3 (the dimension) plus 3 (the number of derivatives). 
  So 
\begin{multline*}
\left| \frac{1}{2S} \int_{-S}^S F(x h_s) ds -
\frac1{\mu(\gmg)} \int_{\gmg} F\, d\mu
\right| \leq 
\left| \frac{1}{2S} \int_{-S}^S F_\vre(x h_s) ds -
\frac1{\mu(\gmg)} \int_{\gmg} F_\vre\, d\mu
\right| \\
+\left| \frac{1}{2S} \int_{-S}^S (F-F_\vre)(x h_s) ds -
\frac1{\mu(\gmg)} \int_{\gmg} (F-F_\vre)\, d\mu
\right| \ll \|F\|_\theta \vre^{-6} S^{-\delta_\Gamma} + \|F\|_\theta \vre^\theta.
\end{multline*}
Taking $\vre=S^{-\delta_\Gamma/(\theta+6)}$ gives a bound of
$\|F\|_\theta S^{\theta \delta_\Gamma / (6 + \theta)} \leq
\|F\|_\theta S^{\theta \delta_\Gamma / 7}$.
\end{proof}

In the non-uniform case use an analogous result of
Str\"ombergsson  \cite{st} (see also \cite{ff} for more
detailed results regarding the deviation of ergodic averages). 
We let 
\eq{eq: defn xi1}{
\xi(x,t)=e^{\dist(xa_{t},\pi(\Psi(\vu_0)))},}
where dist is a metric on $\gmg$ induced by a left-invariant
Riemannian metric on $G$. We fix a parameter
$\sigma$ as in Lemma \ref{lem:fixsigma}(2) and let $K_\sigma$ be a
compact subset of $\gmg$ as in Lemma \ref{lem:holdercontrol}.

\begin{thm}[Str\"ombergsson. Flaminio-Forni]\name{thm: Strombergsson}
For any lattice $\Gamma$ in $G$ there are positive $\delta =
\delta_{\Gamma}$ and $c$ such that for any $S\geq 1$, any
$C^4$-map $F$ on $\gmg$ supported on $K_\sigma$ and any $x \in\gmg$,  
\eq{eq: Strombergsson1}{
\left| \frac{1}{2S} \int_{-S}^S F(x h_s) ds -
\frac1{\mu(\gmg)} \int_{\gmg} F\, d\mu
\right| \leq c \|F\|_{2,4} S^{-\delta} \xi(x,\log S)^{\delta}.
}
\end{thm}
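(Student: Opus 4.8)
The plan is to obtain \equ{eq: Strombergsson1} as a restatement, in the normalization of this paper, of the effective equidistribution theorem for horocycle orbits on $\gmg$ due to Str\"ombergsson \cite{st} (the refined asymptotic expansion of Flaminio--Forni \cite{ff} would serve equally well; only a crude upper bound is needed here, so a self-contained proof via the spectral decomposition of $L^2(\gmg)$ into $\SL(2,\RR)$-irreducibles and matrix-coefficient estimates along $\{h_s\}$ would also work, but I take the cited route). The one structural point I would isolate first is the renormalization identity
\[
h_s a_t = a_t h_{s e^{-t}},
\]
which is immediate from the matrix formulas for $h_s$ and $a_t$. Taking $t = \log S$, it shows that the geodesic flow $a_{\log S}$ maps the symmetric horocycle arc $\{x h_s : |s| \le S\}$ onto the bounded-length arc $\{x a_{\log S} h_{u} : |u| \le 1\}$ based at the point $x a_{\log S}$. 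Thus, after this renormalization, the whole arc stays within a fixed bounded neighbourhood of $x a_{\log S}$, and since the Riemannian metric on $\gmg$ is left-invariant the largest distance from $\pi(\Psi(\vu_0))$ attained along the renormalized arc is $\dist(x a_{\log S}, \pi(\Psi(\vu_0))) + O(1) = \log \xi(x, \log S) + O(1)$.

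Next I would quote Str\"ombergsson's estimate in the form: for any lattice $\Gamma$ in $G$ there are $\delta_\Gamma > 0$ and $c > 0$ such that for every $C^4$ function $F$ on $\gmg$, every $x \in \gmg$ and every $S \ge 1$, the discrepancy $\bigl| \frac{1}{2S}\int_{-S}^S F(x h_s)\, ds - \frac{1}{\mu(\gmg)}\int_{\gmg} F\, d\mu \bigr|$ is at most $c\,\|F\|_{2,4}\,\bigl(S/\eta(x,S)\bigr)^{-\delta_\Gamma}$, where $\eta(x,S) \ge 1$ records the depth of the cusp excursion of the renormalized horocycle arc of the first paragraph (the bound being trivially true when $S \le \eta(x,S)$). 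By that paragraph $\eta(x,S) \asymp \xi(x, \log S)$, so $\bigl(S/\eta(x,S)\bigr)^{-\delta_\Gamma} \ll S^{-\delta_\Gamma}\xi(x,\log S)^{\delta_\Gamma}$; moreover the hypothesis that $F$ be supported on the fixed compact set $K_\sigma$ removes any contribution from the behaviour of $F$ near the cusp, so this excursion factor carries the only $x$-dependence. Absorbing the implied constant into $c$ yields \equ{eq: Strombergsson1}. As a sanity check: on a closed horocycle the left-hand average does not converge to the space average, but there $x a_{\log S}$ diverges into the cusp, so $\xi(x, \log S) \to \infty$ and the estimate is indeed vacuous.

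The part requiring real care is the bibliographic translation rather than any new argument. The results of \cite{st} and \cite{ff} are phrased with conventions differing from ours: the geodesic parameter is scaled differently (here $a_t = \diag(e^{t/2},e^{-t/2})$), the cusp excursion is recorded through an imaginary-part coordinate in a cusp neighbourhood rather than through $\dist(\cdot,\pi(\Psi(\vu_0)))$, the order of the Sobolev norm invoked may not literally be $4$, and --- when $\Gamma$ has cusps --- the sharpest form of the expansion carries a secondary contribution from the continuous (Eisenstein) spectrum. I would therefore verify a handful of routine comparisons: that $\|F\|_{2,4}$ dominates the Sobolev norm appearing in \cite{st} uniformly over functions supported on the fixed set $K_\sigma$; that on $\gmg$ the imaginary-part coordinate and $\dist(\cdot,\pi(\Psi(\vu_0)))$ differ by $O(1)$, so that their exponentials agree up to a multiplicative constant; that the factor-of-$2$ rescaling of the geodesic parameter changes the renormalization time, hence $\xi$, only by a bounded factor; and that for $F$ supported away from the cusp the secondary Eisenstein contribution is absorbed into the error with the stated height dependence. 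Once these identifications are made the theorem follows at once.
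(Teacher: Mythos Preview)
Your proposal is correct and follows essentially the same route as the paper: both obtain the statement by translating Str\"ombergsson's effective equidistribution theorem into the present notation, identifying the cusp-excursion factor $r(x,S)=S/\xi(x,\log S)$ and noting that the support condition on $K_\sigma$ lets one dominate Str\"ombergsson's weighted sup-norm by $\|F\|_{2,4}$. The paper's write-up is slightly more explicit on one point you handle only implicitly through the renormalization identity: Str\"ombergsson's estimate is stated for one-sided averages $\frac{1}{S}\int_0^S$, so one rewrites $\frac{1}{2S}\int_{-S}^S F(xh_s)\,ds=\frac{1}{2S}\int_0^{2S}F(xh_{-S}h_s)\,ds$ and checks $r(x,S)\asymp r(xh_{-S},2S)$, which your renormalization argument indeed yields since $xh_{-S}a_{\log 2S}$ and $xa_{\log S}$ differ by the fixed element $a_{\log 2}h_{-1/2}$.
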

\begin{proof} Let us indicate briefly how to recover \equ{eq: Strombergsson1} from \cite[Theorem 1]{st}.
We will use Str\"ombergsson's notations. For any $S\geq 10$ and 
any parameter $\alpha \in [0,\frac12)$, we have: 

\begin{multline*}
\frac{1}{S}\int_0^S F(x h_s) ds =  \frac1{\mu(\gmg)} \int_{\gmg} F\, d\mu + \\
 O(\| F\|_{2,4}) \left(  r^{-\frac12}\log^3{(r+2)}+  r^{s_1^{(j)}
-1}+S^{s_1-1}  \right) + O(\| F\|_{N_\alpha}) r^{-\frac12}, 
\end{multline*}
where $r(x,S)=S/\xi(x,\log S)$, $\| . \|_{N_\alpha}$ is a weighted
supremum norm and $s_1^{(j)}>0$. The parameter $\alpha$ is chosen to
be zero, and we have  
$\|F\|_{N_\alpha} \ll  \|F\|_{2,4}$, since $F$ is supported on $K_\sigma$.
It can be checked that $r(x,S) \asymp r(x h_{-S},2S)$, and this
combined with the fact that 
$$\frac{1}{2S} \int_{-S}^S F(x h_s) ds = \frac{1}{2S} \int_{0}^{2S}
F(x h_{-S} h_s) ds,$$ 
proves the claim.
\end{proof}

 In the case of $\SL(2,\ZZ)$, it is a classical fact that one can take any $\delta_\Gamma<1/2$. 
 \comfm{The only reference I have on this is an article in german of
Walter Roelcke in 1953 that I did not read. Do you have better ?
Apparently this is due to Selberg but I am unable to say where in his
work it is proved.} \combarak{After a discussion with Shahar Mozes it
seemed like a good place to look is the article {\em Introduction to
Kloostermania} by Huxley in Banach Ctr. Publications No. 17. However I
could not find it in our library. Can you find it? There is also some
information in the paper 
A. Gamburd: Spectral gap for infinite index "congruence'' subgroups of
SL(2, Z), Israel Journal of Mathematics 127 (2002), 157-200, but that
is certainly not the right refernce.
}

The following Corollary is proved the same way as Corollary \ref{cor: Burger}.

\begin{cor}\name{cor: Strombergsson}
For any lattice $\Gamma$ in $G$ there is a positive $\delta =
\delta_{\Gamma}$ such that for any $\theta \in (0,1]$, there exists a
positive $c$ such that for any $S\geq 1$, any $\theta$-H\"older map
$F$ on $\gmg$ supported on $K_\sigma$ and any $x \in\gmg$,

\eq{eq: Strombergsson}{
\left| \frac{1}{2S} \int_{-S}^S F(x h_s) ds -
\frac1{\mu(\gmg)} \int_{\gmg} F\, d\mu
\right| \leq c \|F\|_{\theta} S^{-\theta \delta_\Gamma/8 } \xi(x,\log
S)^{\theta \delta_\Gamma/8}. 
}
\end{cor}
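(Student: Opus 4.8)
The plan is to mimic the proof of Corollary \ref{cor: Burger} exactly, replacing Theorem \ref{thm: Burger} by Theorem \ref{thm: Strombergsson} and tracking how the additional factor $\xi(x,\log S)^{\delta_\Gamma}$ propagates through the mollification argument. So the first step is to fix a $\theta$-H\"older function $F$ on $\gmg$ supported on $K_\sigma$, fix a small radius $\vre>0$, and convolve $F$ with a smooth bump supported in a ball of radius $\vre$ whose $i$-th derivatives are $O(\vre^{-i})$; concretely one uses a $C^\infty$ approximate identity $\psi_\vre$ on $G$ with $i$-th order derivatives bounded by a multiple of $\vre^{-i-4}$ (the shift by $4$ is so that the fourth derivatives needed for the $\|\cdot\|_{2,4}$ norm are accounted for; this is the analogue of the exponent $6=3+3$ in Corollary \ref{cor: Burger}, now replaced by $7 = 3+4$ since we need four derivatives and the dimension is still $3$). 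This produces $F_\vre$ with $\supp F_\vre$ contained in a fixed slightly larger compact set (still inside some $K_{\sigma'}$, which we may absorb into $K_\sigma$ by enlarging $\sigma$ at the outset), satisfying $\sup|F_\vre - F| \le \|F\|_\theta \vre^\theta$ and $\|F_\vre\|_{2,4} \ll \|F\|_\theta \vre^{-7}$.

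Next I would write the triangle-inequality decomposition exactly as in the proof of Corollary \ref{cor: Burger}:
\[
\left| \frac{1}{2S}\int_{-S}^S F(xh_s)\,ds - \frac{1}{\mu(\gmg)}\int_{\gmg} F\,d\mu \right|
\le
\left| \frac{1}{2S}\int_{-S}^S F_\vre(xh_s)\,ds - \frac{1}{\mu(\gmg)}\int_{\gmg} F_\vre\,d\mu \right|
+ 2\sup|F - F_\vre|,
\]
and bound the first term by Theorem \ref{thm: Strombergsson} applied to $F_\vre$, getting $\ll \|F\|_\theta \vre^{-7} S^{-\delta_\Gamma}\xi(x,\log S)^{\delta_\Gamma}$, and the second by $\|F\|_\theta \vre^\theta$. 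Thus the total is
\[
\ll \|F\|_\theta\left( \vre^{-7} S^{-\delta_\Gamma}\xi(x,\log S)^{\delta_\Gamma} + \vre^\theta \right).
\]
Then I optimize in $\vre$: choose $\vre$ so that the two terms balance, i.e. $\vre^{\theta+7} = S^{-\delta_\Gamma}\xi(x,\log S)^{\delta_\Gamma}$, giving $\vre = \left(S^{-\delta_\Gamma}\xi(x,\log S)^{\delta_\Gamma}\right)^{1/(\theta+7)}$, and the resulting bound is $\ll \|F\|_\theta \left(S^{-\delta_\Gamma}\xi(x,\log S)^{\delta_\Gamma}\right)^{\theta/(\theta+7)} = \|F\|_\theta S^{-\theta\delta_\Gamma/(\theta+7)}\xi(x,\log S)^{\theta\delta_\Gamma/(\theta+7)}$. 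Since $\theta \le 1$ we have $\theta+7 \le 8$, so the exponent $\theta\delta_\Gamma/(\theta+7) \ge \theta\delta_\Gamma/8$; because $\xi \ge 1$ while $S^{-1}\le 1$, replacing the exponent $\theta\delta_\Gamma/(\theta+7)$ by the smaller $\theta\delta_\Gamma/8$ only weakens both factors appropriately (the $S^{-\cdot}$ factor gets larger but the $\xi^\cdot$ factor gets smaller, so one must be slightly careful — but in the regime $\xi(x,\log S) \le S$, which is automatic since dist grows at most linearly and $a_{\log S}$ moves a fixed point a distance $\ll \log S$, the combined quantity $S^{-1}\xi(x,\log S) \le 1$, and raising to a smaller positive power only increases it, hence weakening the exponent from $\theta\delta_\Gamma/(\theta+7)$ to $\theta\delta_\Gamma/8$ is legitimate), yielding \equ{eq: Strombergsson}.

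The only genuine subtlety — and the step I expect to need the most care — is the bookkeeping with the support: convolving $F$ (supported on $K_\sigma$) with $\psi_\vre$ enlarges the support to an $\vre$-neighborhood of $K_\sigma$, and Theorem \ref{thm: Strombergsson} requires the argument function to be supported on $K_\sigma$. This is handled by fixing at the outset a slightly larger $\sigma' > \sigma$ and a compact $K_{\sigma'} \supset$ (the $1$-neighborhood of $K_\sigma$), applying Theorem \ref{thm: Strombergsson} with $K_{\sigma'}$ in place of $K_\sigma$ (the theorem holds for any such compact set, with the implied constant depending on it), and restricting to $\vre \le 1$ so that $\supp F_\vre \subset K_{\sigma'}$ throughout; the final statement is then recorded for the original $K_\sigma$ since enlarging the admissible support class is harmless. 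Everything else is identical to Corollary \ref{cor: Burger}, and no new ideas are required.
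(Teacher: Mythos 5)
Your proposal follows exactly the route the paper takes (the paper's proof is literally ``the same way as Corollary \ref{cor: Burger}''), and the bookkeeping is right: mollify with $\|F_\vre\|_{2,4}\ll\|F\|_\theta\vre^{-7}$ (the exponent $7=3+4$), balance to get the exponent $\theta\delta_\Gamma/(\theta+7)$, and use $\theta+7\leq 8$. The support issue you isolate is handled correctly by enlarging $\sigma$ and keeping $\vre\leq 1$.

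However, one step is justified by a false claim: it is \emph{not} automatic that $\xi(x,\log S)\leq S$. By \equ{eq: defn xi1}, $\xi(x,\log S)=e^{\dist(xa_{\log S},\,\pi(\Psi(\vu_0)))}$, and while $a_{\log S}$ moves $x$ a distance $\ll\log S$, the point $x$ itself ranges over all of the noncompact space $\gmg$ and may already be arbitrarily far from the base point (e.g.\ $x=ya_{-\log S}$ with $y$ deep in a cusp gives $\xi(x,\log S)=e^{\dist(y,\pi(\Psi(\vu_0)))}$, as large as you like, independently of $S$). In the regime $\xi(x,\log S)>S$ your argument breaks in two places: the balancing choice $\vre=\bigl(S^{-\delta_\Gamma}\xi(x,\log S)^{\delta_\Gamma}\bigr)^{1/(\theta+7)}$ exceeds $1$, violating the restriction you yourself imposed for the support containment, and the passage from the exponent $\theta\delta_\Gamma/(\theta+7)$ to $\theta\delta_\Gamma/8$ goes the wrong way since $S^{-1}\xi(x,\log S)>1$. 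The gap is small and easily patched by a case distinction: when $\xi(x,\log S)\geq S$ the right-hand side of \equ{eq: Strombergsson} is at least $c\|F\|_\theta$, while the left-hand side is $\ll\|F\|_\theta$ because $F$ is supported in the fixed compact set $K_\sigma$ (chain the H\"older seminorm along a path of bounded length from any point of $K_\sigma$ to a nearby zero of $F$, which exists since $\gmg$ is noncompact in the relevant non-uniform case), so the estimate is trivial there; when $\xi(x,\log S)<S$ your argument is exactly the paper's and goes through verbatim. With that two-line fix the proof is complete and coincides with the intended one.
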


\section{Proof of Theorem  \ref{thm: main, cocompact}} 
 Writing $f$ as the sum of a nonnegative and a nonpositive function,
it is sufficient to prove the Theorem under the assumption that $f$ is
nonnegative. 
Let $\delta_{\Gamma}$ be as in Theorem \ref{thm: Burger} and let $\delta_0
= \delta_\Gamma/21.$ 
Given $f$ and $\vu$, let $D = D(\vu, f)$ be as in \equ{eq: defn D}. In
view of \equ{eq: sameD} it suffices to prove the Theorem with $D$ replacing
$D_0$. Let $\alpha \geq 1$ a parameter that will be fixed later, and
take a radial partition of unity 
$f = \sum f^{(\alpha)}_{\ell}.$ Thus the $f_{\ell} =
f^{(\alpha)}_\ell$ are nonnegative $\theta$-H\"older functions for which, by
\equ{eq: thin support},
\eq{eq: choice of Ri}{
r_{\ell}=e^{(\ell-1)/\alpha}|\vu| \leq
r^{(e^{\ell/\alpha}\vu)}(f_{\ell}),\ \ \ R_{\ell} =
e^{(\ell+1)/\alpha}|\vu| \geq R^{(e^{\ell/\alpha}\vu)}(f_{\ell}). 
}
Hence for any $\ell$ with $f_\ell$ nonzero, we have 
\eq{eq: boundonrl}{
r_\ell \stackrel{\equ{eq: ellbounds}}{\ll} \frac{|\vu|}{r(f)} \ll D
, \ \ \mathrm{and} \;\; \frac{1}{r_\ell}\ll \frac{R(f)}{|\vu|}, 
}
and $R_\ell =e^{2/\alpha}r_\ell \leq e^2 r_\ell \ll D.$
Let $c>1$ be such that $R_\ell \leq cD/3 $, 
fix $T_0=cD$ as in \equ{eq: choice of T0}, so that
\eq{eq: T large enough}{
T_0 \geq  \frac{|\vu|}{R(f)},
}
and consider any $T\geq T_0$. Let $\til u = \Psi(\vu)$ and $x_0 = \pi(\til
u)$. Fix the value of $\alpha$ to be
$$\alpha=\left( \frac{R(f) T}{|\vu|}  \right)^{\theta
\delta_0}\stackrel{\equ{eq: T large enough}} {\geq} 1,$$ 
 define $\til f_{\ell}$ and $\bar f_{\ell}$ by \equ{eq: defn til
f}, \equ{eq: defn bar f}, and set 
$$
\til u = \Psi(\vu), \ x_0 = \pi(\til u), \ \ \mathrm{and } \ 
\eta = 1- \theta \delta_\Gamma/7,$$
so that 
\eq{eq: property of alpha}{
\alpha^2 T^{\eta} =  \left(\frac{R(f)}{|\vu|} \right)^{2\theta \delta_0} T^{1-\theta
\delta_0} , \ \ \  \frac{T}{\alpha} = \frac{|\vu|}{R(f)} T^{1-\theta \delta_0}.  
}
Then for an upper bound we have:
\[
\begin{split}
S_{f,\vu}(T)  & = \sum_{\gamma \in \Gamma_T} f(\gamma \vu) = \sum_{\ell}
\sum_{\gamma \in \Gamma_T} f_{\ell}(e^{\ell/\alpha} \gamma \vu) \\ 
& \stackrel{\equ{eq: for boundary effect},\equ{equ2}}{=} \sum_{\ell} \sum_{\gamma \in
\Gamma_T} \int_{-\left(1+(T+D)/r_{\ell}\right)}^{1+(T+D)/r_{\ell}}  \til f_{\ell}
(\gamma \til \vu a_{2\ell/\alpha}h_s)\, ds \\
& \stackrel{f_{\ell} \geq 0}{\leq} \sum_{\ell}
\int_{-\left(1+(T+D)/r_{\ell}\right)}^{1+(T+D)/r_{\ell}}
\bar{f}_{\ell} (x_0 a_{2\ell/\alpha} h_s)\, ds \\ 
& \stackrel{\equ{eq: Burger}}{\leq} \frac{2}{\mu(\gmg)} \sum_{\ell}
\left(\frac{T+D}{r_{\ell}} + 1 \right) 
\int_{\gmg} \bar{f}_{\ell} \, d\mu + c_1\sum_{\ell} \|\bar{f}_{\ell}\|_{\theta}
\left(\frac{T+D}{r_{\ell}}+1\right)^{\eta} \\
& \stackrel{\equ{eq: normalization again}, \equ{eq:
holdercontrol},
\equ{eq: boundonrl}}{\leq} \frac{2(T+c_2D)}{\mu(\gmg)}
\sum_{\ell} 
r_{\ell}^{-1} \int_{\PP} f_{\ell} \, dx 
+ c_3 T^{\eta}
\sum_{\ell} \|f_{\ell}\|_{\theta} r_\ell^{-\eta}
\left(1+ \frac{D}{T} +\frac{r_\ell}{T}\right)^{\eta}
\\
& \stackrel{\equ{eq: estimateofholder},
\equ{eq: intup}}{\leq} \frac{2(T + c_3D) }{\mu(\gmg)}  
e^{2/\alpha} \int_{\PP} \frac{f(\vv)}{\vv \star \vu} \, d\vv  
+  c_4 \alpha T^{\eta} \|f\|_{\theta} \left( \frac{R(f)}{|\vu|}\right)^{\eta}  
(\alpha \log v^{(\vu)}(f)+2)\\ 
& \stackrel{e^{2/\alpha}=1+O(1/\alpha) }{\leq}
\frac{2T}{\mu(\gmg)} \int_{\PP} 
\frac{f(\vv)}{\vv \star \vu} \, d\vv + c_5 \left( D+\frac{T}{\alpha}\right) \int_{\PP} 
\frac{f(\vv)}{\vv \star \vu} \, d\vv \\
& \; \; \; \; \ \ \ \ \ 
+ c_6 \alpha^{2} T^{\eta}\ (\log v^{(\vu)}(f)  +1)
\|f\|_{\theta} \left( \frac{R(f)}{|\vu|}\right)^{\eta}.
\end{split}
\]
Using the upper bound \equ{eq: intnosolarge} and \equ{eq: property of
alpha} we obtain
$$ 
S_{f,\vu}(T) - \frac{2T}{\mu(\gmg)} \int_{\PP} \frac{f(\vv)}{\vv \star \vu} \, d\vv  
\ll  \|f \|_{\theta}\frac{R(f)}{|\vu|} \left( D+T^{1-\theta \delta_0}
(\log v^{(\vu)}(f)
+1)\left(\frac{R(f)}{|\vu|}\right)^{-\theta\delta_0} \right), 
$$
as claimed. For the lower bound, the proof is very similar, with upper
bounds replaced by lower bounds, $r_\ell$ replaced by $R_\ell$, except
that in order to apply \equ{eq: Burger} to $\bar{f_\ell}$ for the time
$S=\frac{T-D}{R_\ell}-1$, one has to check that if $\ell$ is such that
$f_\ell$ is nonzero, then 
$$\frac{T-D}{R_\ell}-1\geq 1.$$
Since $R_\ell\leq cD/3$, and $T\geq T_0\geq cD$, we have
$$\frac{T-D}{R_\ell}-1 \geq \frac{cD - D}{cD/3} -1 \geq 1,$$
as required. 
\qed

\begin{proof}[Proof of Corollary \ref{cor: applications}]
Let $\delta=\min \left( 1-|\alpha|, \theta \delta_0(1+\alpha)
\right)>0$. 
We apply Theorem \ref{thm: main, cocompact} to $f$ and 
$$\vw= \vw(T)= \frac{\vu}{T^{\alpha}}.$$
Considering separately the cases $\alpha \geq 0$ and $\alpha \leq 0$,
we see that 
$$D_0(\vw, f) \leq c_0 T^{|\alpha|},$$
where $c_0$ is a constant
depending on $f$ and $\vu$. Note that
$\displaystyle{\frac{R(f)}{|\vw|}=T^\alpha \frac{R(f)}{|\vu|}},$ so
that 
$$B(\vw,f)=T^{-\alpha \theta \delta_0}B(\vu,f).$$
In order to
apply Theorem \ref{thm: main, cocompact}, we need to check 
\equ{eq: choice of T0}, i.e., that
$$T \geq c c_0T^{|\alpha|},
$$
which clearly holds for all large enough $T$.
Therefore there is a positive $C$ (depending on $f$ and $\vu$ but
independent of $T$) such that 
\[ \begin{split}
CT^\alpha\left (T^{|\alpha|}+T^{1-\theta\delta_0(1+\alpha)} \right)  &
> \left|S_{f, \vw}(T) - \frac{2T}{\mu(\gmg)} \int_{\PP} 
\frac{f(\vv)}{\vv \star \vw}\, d\vv \right| =\\ 
& \ \ \ \, \left|S_{f, \vw}(T) -
\frac{2T^{1+\alpha}}{\mu(\gmg)} \int_{\PP} 
\frac{f(\vv)}{\vv \star \vu}\, d\vv \right|.
\end{split} \]
Dividing through by $T^{1+\alpha}$ gives
$$\left| \frac{S_{f, \vw}(T)}{T^{1+\alpha}} -
\frac{2}{\mu(\gmg)} \int_{\PP} 
\frac{f(\vv)}{\vv \star \vu}\, d\vv \right| < C\left(
T^{|\alpha|-1}+T^{-\theta \delta_0(1+\alpha)}\right) \leq C' T^{-\delta}.$$ 
\end{proof}

\section{Diophantine properties}
 Let $x \in \gmg$, $0\leq s_1 \leq s_2$ be two real numbers. The
quantity 
 \[ \xi(x , s_1, s_2)
=\max_{s_1\leq s \leq s_2} \xi(x,s), \ \ \  \xi (x,s) \mathrm{\ as \
in \ } \equ{eq: defn xi1}\]
describes the excursions of the geodesic $xa_s$ into the cusps of
$\gmg$ for times $s \in [s_1, s_2]$. In the case $\Gamma=\SL(2,\ZZ)$
and $x=\pi \circ \Psi(\vv)$, 
one can relate 
$\xi(x,s_1,s_2)$ with the diophantine properties 
of the slope of $\vv$.   
 
\begin{lem} 
 Let $\vv=\left[ \begin{array}{c} \vv_x \\ \vv_y \end{array} \right]
\in \PP$ such that $|\vv|=1$. Then
$$ \xi\left(\pi \circ \Psi(\vv),s_1,s_2\right) \ll \hat{\xi}\left( \vv,
s_1,s_2 \right)$$ 
\end{lem}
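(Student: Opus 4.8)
The plan is to unwind the definition of $\xi$ as an excursion into the cusp and translate it, via the explicit formulas for $\Psi$ and for the $\SL(2,\Z)$ cusp, into a statement about how well the slope of $\vv$ is approximated by rationals — which is exactly what $\hat\xi$ measures. First I would fix $\vv$ with $|\vv|=1$ and let $z$ be the reduced slope appearing in the definition of $\hat\xi(\vv,\cdot,\cdot)$; replacing $\vv$ by one of the four vectors $(\pm\vv_x,\pm\vv_y)$ or applying an element of $\SL(2,\Z)$ (which does not move the basepoint $\pi\circ\Psi(\vv)$ in $\gmg$, up to bounded distance, and only relabels the continued fraction data) reduces us to the case where the slope is $z\in[0,1)$. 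Then $\pi\circ\Psi(\vv)$ is, up to bounded distance, the point $\Gamma g$ with $g$ the upper-triangular-times-rotation matrix whose ``$x$-coordinate'' in the upper half plane is $z$; and $\pi\circ\Psi(\vv)a_s = \Gamma g a_s$ corresponds to the geodesic in $\mathbb H$ descending vertically from the point $z$ on the real line, reparametrized so that time $s$ sits at height $\asymp e^{-s}$.

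The second step is the standard dictionary between cusp excursions of the geodesic aimed at $z$ and the continued fraction convergents of $z$: the geodesic from $z$ makes its $k$-th deep excursion into the cusp near ``time'' $t_k = -\log|z-p_k/q_k|$, and the depth of that excursion (i.e. $\max_s \dist(\Gamma ga_s, \text{basepoint})$ over $s$ near $t_k$, exponentiated) is $\asymp a_{k+1} \asymp e^{t_k}/q_k^2$ by \equ{eq: tk bounds}. This is where I would invoke the classical reduction theory of $\SL(2,\Z)\backslash\mathbb H$ (the geodesic is at height $\asymp 1/(q_k^2|z-p_k/q_k|) = e^{t_k}/q_k^2$ at the apex of its $k$-th excursion, after acting by the matrix $\begin{bmatrix} p_k & * \\ q_k & * \end{bmatrix}\in\SL(2,\Z)$ to bring it into the standard fundamental domain). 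Consequently $\xi(\pi\circ\Psi(\vv),s) \asymp e^{t_k}/q_k^2 \asymp a_{k+1}$ for $s$ in the window around $t_k$, and $\xi$ is bounded (by a constant times $e^s$, or by $O(1)$) for $s$ away from all such windows. Taking the max over $s\in[s_1,s_2]$ picks out exactly those $a_{k+1}$ for which the window $[t_{k-1},t_{k+1}]$ (roughly) meets $[s_1,s_2]$, together with the ``trivial'' bound $e^{s_2}$ coming from the endpoint; comparing with the definition of $\hat\xi(z,s_1,s_2)$ — which is precisely $\max\{a_k : s_1\le t_{k+1},\, t_{k-1}\le s_2\}$, capped by $e^{s_2}$ — gives $\xi(\pi\circ\Psi(\vv),s_1,s_2)\ll\hat\xi(\vv,s_1,s_2)$. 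The rational case is handled by the same picture: once $k$ exceeds the length $k_0$, the geodesic has already landed in (a neighborhood of) the cusp corresponding to $p_{k_0}/q_{k_0}$ and stays at height $\asymp e^{s}/q_{k_0}^2$, which matches the extra term $e^{s_2}/q_{k_0}^2$ in the definition of $\hat\xi$ for rational $z$.

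The main obstacle is making the informal phrases ``near time $t_k$'' and ``the window around $t_k$'' precise enough to get a clean two-sided comparison, and in particular checking that consecutive excursion windows $[t_{k-1},t_{k+1}]$ overlap enough to cover the whole ray $[s_1,s_2]$ so that no excursion is missed and the intermediate (bounded-height) portions of the geodesic do not contribute more than the claimed bound. This is exactly the content one extracts from \equ{eq: tk bounds} together with the recursion $q_{k+1}=a_{k+1}q_k+q_{k-1}$: one shows $t_{k+1}-t_{k-1}$ is comparable to $\log(a_{k+1}a_k) + O(1)$ and that between apexes the geodesic stays at height $O(1)$, so the only surviving contributions to the max are the apex heights $a_{k+1}$ with the stated constraints on $t_{k\pm1}$. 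I would organize the argument so that only the upper bound $\xi\ll\hat\xi$ is proved (that is all the Lemma asserts), which lets me be generous with implied constants throughout and absorb all the $O(1)$ shifts in the time windows into the Vinogradov constant.
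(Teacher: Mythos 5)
Your strategy is essentially the paper's own proof: reduce to slope $z\in[0,1)$ using the four auxiliary matrices and forward asymptoticity under $a_s$, follow the vertical geodesic $z+ie^{-s}$ in the upper half-plane, and identify deep cusp excursions with continued-fraction convergents, bounding the apex height by $1/(2q_k^2|z-p_k/q_k|)\asymp a_{k+1}$ with the time window $t_{k-1}\leq t\leq t_{k+1}$ and the trivial cap $e^{s_2}$. The paper sidesteps your "window-covering" worry by arguing pointwise in $t$ — reading the rational $p/q$ off the bottom row of the reducing matrix $\gamma_t$ and invoking the classical criterion (Hardy--Wright, Theorem 184) that $q^2|z-p/q|\leq 1/2$ forces $p/q$ to be a convergent — which yields the needed one-sided bound directly; otherwise the two arguments coincide.
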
 
\begin{proof}
Clearly, for all $x$ in a fixed compact set, $\xi(x,\tau_1,\tau_2)\ll e^{\tau_2}$.
With no loss of generality we can assume that the slope $z$ of $\vu$
lies in the interval $[0,1)$; indeed, for any $\gamma \in \Gamma$,
$\pi \circ \Psi(\gamma \vv)$ and $\pi \circ \Psi(\vv)$ are asymptotic
under the flow $(a_s: s>0)$, and for any $\vv$, one of the elements
$\gamma_i \vv$ has slope in $[0,1)$, where  
 $$\gamma_1=e, \, \gamma_2=\left[ \begin{array}{cc} 0 &-1 \\ 1 & 0 \end{array} \right]  , \,
 \gamma_3=\left[ \begin{array}{cc} 1 &-1 \\ 1 & 0 \end{array} \right], \,  
\gamma_4=\left[ \begin{array}{cc} 1 & 1 \\ 0 & 1 \end{array} \right].$$

 Consider the half-space model of the hyperbolic space $\{ z \in {\bf
C} : \Im(z)>0\}$. Recall that  
 $$\mathcal{F}=\{ z \in {\bf C} \, : \, |z|\geq 1, \Re(z) \in [-1/2,1/2] \}$$
is a fundamental domain for the action of $\PSL(2,\ZZ)$. 
 The basepoints of $(\Psi(\vv)a_t)_{t\geq 0}$ lie at a uniformly
bounded distance from the geodesic ray $z_t=(z+i e^{-t})_{t\geq
0}$. Let $t \in [s_1,s_2]$, and $\gamma_t \in \PSL(2,\ZZ)$ such
that $\gamma_t  z_t \in \mathcal{F}$. Then the difference $|\dist(\pi
z_t, \pi \circ \Psi(\vu_0)) - \log \Im (\gamma_t z_t) |$ is bounded, so  
 $$\xi(\vv,s_1,s_2) \ll \sup_{t\in [s_1,s_2]} \Im (\gamma_t z_t).$$
 
 Consider a fixed $t>0$, and define $p,q \in \ZZ$ (depending on $t$) by
 $$\gamma_t=\left[ \begin{array}{cc} * & * \\ q & -p \end{array} \right].$$
A standard computation gives that for any $s\in \RR$,
 $$\Im (\gamma_t z_s)= \frac{1}{(qz-p)^2e^s + q^2e^{-s}}.$$
This implies that if $z \neq p/q$, the maximum of $s \mapsto \Im
(\gamma_t z_s)$ is attained for $s=-\log {|z-p/q|}$, and its value is
equal to $\frac1{2q^2|z-p/q|}$. If  $\Im (\gamma_t z_t)\geq 1$, we
have $q^2|z-p/q|\leq 1/2$ and by \cite[Theorem 184]{HW}, $p/q$ are
necessarily convergents of the continued fraction, so there exists a
$k\geq 0$ such that $|p_k|=|p|$ and $|q_k|=|q|$, and $k$ satisfies 
$ t_{k-1} \leq t \leq t_{k+1} $. 

This completes the proof in case $z$ is irrational. 
The case of rational $z$ is similar and we omit it. 
\end{proof}
The following well-known result gives a bound on the continued fraction
expansion of $\beta$-diophantine vectors. We provide a
proof for the sake of completeness.

\begin{lem} 
\name{lem: betadiophantine}
Assume $z \in [0,1)$ is $\beta$-diophantine. Then
$$\hat{\xi}(z,\tau_1,\tau_2) \ll 
e^{\frac{\beta-2}{\beta}\tau_2}.$$
\end{lem}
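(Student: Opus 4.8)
The plan is to unwind the definition of $\hat\xi(z,\tau_1,\tau_2)$ and use the two-sided bound \equ{eq: tk bounds} relating $t_k$, $q_k$ and $a_{k+1}$ to translate the $\beta$-diophantine hypothesis $|z-p_k/q_k|\geq c\,q_k^{-\beta}$ into a direct upper bound on the relevant partial quotients $a_k$. Recall that $\hat\xi(z,\tau_1,\tau_2)$ is the maximum of $a_k$ over those $k$ with $\tau_1\leq t_{k+1}$ and $t_{k-1}\leq \tau_2$; since the $t_k$ are increasing, the constraint $t_{k-1}\leq\tau_2$ already forces $t_k$ (hence $e^{t_k}$, hence $q_k^2$) to be controlled in terms of $e^{\tau_2}$, up to a factor of $a_{k+1}$ which we will have to absorb.

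First I would use the $\beta$-diophantine inequality together with the classical estimate $|z-p_k/q_k|\asymp 1/(q_kq_{k+1})$ (equivalently $q_{k+1}\ll q_k^{\beta-1}$): this says $a_{k+1}q_k\leq q_{k+1}\ll q_k^{\beta-1}$, i.e. $a_{k+1}\ll q_k^{\beta-2}$. Next, from the left-hand inequality in \equ{eq: tk bounds}, $a_kq_{k-1}^2\leq e^{t_{k-1}}$, so if $k$ is admissible for $\hat\xi$ then $t_{k-1}\leq\tau_2$ gives $q_{k-1}^2\leq e^{\tau_2}$, hence $q_k\leq (a_k+1)q_{k-1}\ll a_k\, e^{\tau_2/2}$. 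Combining, for an admissible index $k$,
$$a_k\ll a_{k+1}\ll q_{k-1}^{\beta-2}\leq e^{\frac{\beta-2}{2}\tau_2},$$
where in the first step I shift from $a_k$ to $a_{k+1}$ using that $t_{k}\leq t_{k+1}$ is not quite what is needed — instead I would directly bound $a_k$ itself: $a_k\, q_{k-1}\leq q_k\leq q_{k-1}q_{k-2}\cdot(\text{stuff})$ is circular, so more cleanly I use $|z-p_{k-1}/q_{k-1}|\geq c q_{k-1}^{-\beta}$ and $|z-p_{k-1}/q_{k-1}|\asymp 1/(q_{k-1}q_k)$ to get $q_k\ll q_{k-1}^{\beta-1}$, whence $a_k\leq q_k/q_{k-1}\ll q_{k-1}^{\beta-2}\leq e^{\frac{\beta-2}{2}\tau_2}$, and then I note the claimed exponent in the lemma is $\frac{\beta-2}{\beta}$, which is weaker than $\frac{\beta-2}{2}$ for $\beta>2$, so in fact this cruder argument already suffices; alternatively the sharper exponent $\frac{\beta-2}{\beta}$ comes from optimizing the split between the $q_{k-1}^2$ and the $a_{k+1}$ factor in $e^{t_k}\asymp a_{k+1}q_k^2$, and I would carry out that optimization to match the stated bound.

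The main obstacle will be the bookkeeping around which index ($k$, $k-1$, or $k+1$) the hypothesis $t_{k-1}\leq\tau_2$ really controls, and getting the exponent down to exactly $\frac{\beta-2}{\beta}$ rather than a larger constant: the point is that $e^{t_k}\asymp a_{k+1}q_k^2$, and $a_{k+1}\ll q_k^{\beta-2}$, so $e^{t_k}\ll q_k^{\beta}$; thus an admissible $k$ with $t_{k-1}\leq\tau_2$ has $q_{k-1}\ll e^{\tau_2/\beta}$ (using $e^{t_{k-1}}\ll q_{k-1}^\beta$), and then $a_k\ll q_{k-1}^{\beta-2}\ll e^{\frac{\beta-2}{\beta}\tau_2}$, which is exactly the asserted bound. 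Finally I would handle the degenerate/rational cases and the clause ``if the set is empty we set $\hat\xi=e^{\tau_2}$'' separately: when $z$ is rational, the extra terms $e^{\tau_2}/q_{k_0}^2$ and $e^{\tau_2}$ in the definition of $\hat\xi$ are already $\leq e^{\tau_2}$, which is dominated by $e^{\frac{\beta-2}{\beta}\tau_2}$ only when $\tau_2$ is bounded — so here I would invoke the $\infty$-diophantine convention for rationals (for which $\beta=\infty$ makes the claimed bound $e^{\tau_2}$, trivially true) and restrict the genuine content of the lemma to finite $\beta$ and irrational $z$, for which all partial quotients are genuinely constrained as above.
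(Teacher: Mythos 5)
Your proposal assembles the right ingredients but never validly derives the stated exponent; as written there is a genuine gap at the key step. Two things go wrong. First, your remark that the lemma's exponent $\frac{\beta-2}{\beta}$ is ``weaker than $\frac{\beta-2}{2}$, so the cruder argument already suffices'' is backwards: for $\beta>2$ one has $\frac{\beta-2}{\beta}<\frac{\beta-2}{2}$, so the bound $e^{\frac{\beta-2}{2}\tau_2}$ you get from $q_{k-1}\leq e^{\tau_2/2}$ is \emph{weaker} than what the lemma asserts, and does not prove it. Second, your attempted sharpening is a non sequitur: from $t_{k-1}\leq\tau_2$ and $e^{t_{k-1}}\ll q_{k-1}^{\beta}$ you cannot conclude $q_{k-1}\ll e^{\tau_2/\beta}$, since both inequalities are upper bounds on $e^{t_{k-1}}$. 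The claim is in fact false: for a badly approximable $z$ (all partial quotients bounded, and such $z$ is $\beta$-diophantine for every $\beta\geq 2$) one has $q_{k-1}\asymp e^{t_{k-1}/2}$, which can be of size comparable to $e^{\tau_2/2}\gg e^{\tau_2/\beta}$. So the one concrete route you offer to the exponent $\frac{\beta-2}{\beta}$ does not work, and ``I would carry out that optimization'' is left unexecuted.

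The missing step --- which is exactly the paper's proof --- is to keep \emph{both} of your upper bounds on $a_k$ simultaneously and optimize over the unknown denominator, making no claim about the actual size of $q_{k-1}$. For an admissible $k$, the left inequality of \equ{eq: tk bounds} at index $k-1$ together with $t_{k-1}\leq\tau_2$ gives $a_k\leq e^{\tau_2}/q_{k-1}^2$, while the $\beta$-diophantine hypothesis at index $k-1$ gives (as you correctly derived) $a_k\leq q_{k-1}^{\beta-2}/c$. Hence $a_k\leq\min\bigl(e^{\tau_2}/q_{k-1}^2,\ q_{k-1}^{\beta-2}/c\bigr)\leq\max_{q>0}\min\bigl(e^{\tau_2}/q^2,\ q^{\beta-2}/c\bigr)$, and since the first function of $q$ is decreasing and the second increasing, the maximum of the minimum is attained at the crossing point $q\asymp e^{\tau_2/\beta}$, where the common value is $\asymp e^{\frac{\beta-2}{\beta}\tau_2}$. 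This closes the gap; your index bookkeeping is otherwise essentially sound, and handling rational $z$ via the $\beta=\infty$ convention (where the bound is the trivial $e^{\tau_2}$) is acceptable.
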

\begin{proof}
 Let $k$ be such that $t_{k-1}\leq \tau_2$. 
By \equ{eq: tk bounds},
$a_kq_k^2\leq e^{\tau_2}$. On the other hand, 
the assumption that $z$
is $\beta$-diophantine means that  
 $e^{t_k} \leq q_k^{\beta}/c $, so using \equ{eq: tk bounds} again we have
$a_k  \leq q_k^{\beta-2}/c.$  
Thus 
\eq{eq: min}{a_k \leq \min\left(\frac{e^{\tau_2}}{q_k^2},\
\frac{q_k^{\beta-2}}{c} \right).}
The maximum of $q\mapsto
\min(e^{\tau_2}/q^2,q^{\beta-2}/c)$ is attained when
$q=c^{1/\beta}e^{\tau_2/\beta}$, and plugging this value into
\equ{eq: min} proves the claim. 
\end{proof}
\section{Proof of Theorem  \ref{thm: main, SL(2,Z)}} 
We retain the notations of the previous section; 
for the reader's amusement we prove this time
the lower bound. Let $\delta_{\Gamma}$, and let
$\delta_0=\delta_\Gamma/24$. Define $D,f_\ell, r_\ell, R_\ell,c$ as
before. 
Write 
$$\xi_{f,T,\vu}= \xi\left( \pi \circ \Psi \left( \frac{\vu}{|\vu|} \right),\log
\left( \frac{T |\vu|}{R(f)} \right) 
,\log \left(\frac{T |\vu|}{r(f)}\right) \right).$$

Assume 
\eq{eq: T is large}{T\geq c\, D}
and 
\eq{eq: T xi large}{
T \geq \frac{\xi_{f,T,\vu} |\vu|}{R(f)}.
}

 Let $\til u = \Psi(\vu)$ and $x_0 = \pi(\til
u)$. Set 
\eq{eq: definition of alpha}{
\alpha=\left( \frac{R(f) T}{|\vu|\, \xi_{f,T,\vu} }  \right)^{\theta
\delta_0}\stackrel{\equ{eq: T xi large}} {\geq} 1.}

\begin{lem}
 For any $\ell$ such that $f_\ell$ is nonzero, we have 
$$
\xi \left( x_0a_{2\ell/\alpha},
\log \left( \frac{T-D}{R_\ell}-1 \right) \right) 
\ll
\xi_{f,T,\vu}. 
$$
\end{lem}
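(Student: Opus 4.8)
The goal is to bound the cusp excursions of the geodesic segment $\{x_0 a_{2\ell/\alpha}a_s : 0 \le s \le \log((T-D)/R_\ell - 1)\}$ in terms of $\xi_{f,T,\vu}$, which controls the excursions along the fixed segment attached to $\vu/|\vu|$ over the time interval $[\log(T|\vu|/R(f)), \log(T|\vu|/r(f))]$. First I would use relation \equ{equ2}, namely $\Psi(e^t\vv) = \Psi(\vv)a_{2t}$, to recognize that $x_0 a_{2\ell/\alpha} = \pi\circ\Psi(e^{\ell/\alpha}\vu) = \pi\circ\Psi(e^{\ell/\alpha}|\vu| \cdot \vu/|\vu|)$, so that the geodesic starting at $x_0 a_{2\ell/\alpha}$ and run for time $s$ has the same $a_s$-trajectory (up to the fixed basepoint in $\gmg$, and up to the reparametrisation induced by $\Psi(\vv)a_{2t}$ versus $\Psi(\vv)a_t$) as the trajectory $\pi\circ\Psi(\vu/|\vu|)a_{s'}$ with $s' = s + 2\ell/\alpha$ (taking account of the factor of $2$ in the exponent of $a$ in \equ{equ2}). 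Thus it suffices to check that the relevant range of $s'$ — coming from the starting time $2\ell/\alpha$ and the length $\log((T-D)/R_\ell - 1)$ — is contained, up to bounded additive error, in the interval defining $\xi_{f,T,\vu}$, i.e. in $[\log(T|\vu|/R(f)), \log(T|\vu|/r(f))]$.

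The heart of the matter is therefore a bookkeeping comparison of time intervals. The starting parameter $2\ell/\alpha$ is governed by the bounds \equ{eq: ellbounds}, which give $e^{-1/\alpha} r^{(\vu)}(f)/|\vu| \le e^{-\ell/\alpha} \le e^{1/\alpha} R^{(\vu)}(f)/|\vu|$; using $r^{(\vu)}(f) \asymp r(f)|\vu|$ and $R^{(\vu)}(f) \asymp R(f)|\vu|$, this says $\ell/\alpha$ is, up to $O(1/\alpha)=O(1)$, between $-\log(R(f)/|\vu|)$ and $-\log(r(f)/|\vu|)$. The length of the segment is $\log((T-D)/R_\ell - 1)$; since $T \ge cD$ by \equ{eq: T is large} this is $\log(T/R_\ell) + O(1)$, and $R_\ell = e^{(\ell+1)/\alpha}|\vu|$, so the segment length is $\log(T/|\vu|) - \ell/\alpha + O(1)$. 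Adding the starting time, the endpoint parameter (after accounting for the factor $2$ from \equ{equ2}, which rescales $s$ to $2s$) lands at $\log(T/|\vu|) - \ell/\alpha + \ell/\alpha + O(1)$ on one end and at the start $2\ell/\alpha \asymp$ the endpoints above — so one checks that the full range $[2\ell/\alpha,\ 2\ell/\alpha + 2\log((T-D)/R_\ell-1)]$ is contained in $[\log(T|\vu|/R(f)) - O(1),\ \log(T|\vu|/r(f)) + O(1)]$. Finally, since $\xi(x,s)$ is bounded below (the distance is nonnegative), enlarging the time interval by a bounded additive amount only changes $\xi(x,s_1,s_2)=\max_{s_1\le s\le s_2}\xi(x,s)$ by a bounded multiplicative factor, absorbed into the $\ll$.

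The main obstacle — really the only subtle point — is handling the two sources of bounded error cleanly: the $O(1/\alpha)$ slack in \equ{eq: ellbounds} and \equ{eq: choice of Ri} coming from the width of the annular cutoff, and the $O(1)$ coming from replacing $\log((T-D)/R_\ell - 1)$ by $\log(T/R_\ell)$, which uses \equ{eq: T is large} to ensure $T-D \asymp T$ and also that the argument of the logarithm stays bounded away from $1$ (exactly as in the lower-bound verification at the end of the proof of Theorem \ref{thm: main, cocompact}, where one checks $(T-D)/R_\ell - 1 \ge 1$ using $R_\ell \le cD/3$ and $T \ge cD$). Once these are in hand, the inclusion of time intervals is immediate, and the comparison $\xi(x_0 a_{2\ell/\alpha}, \log((T-D)/R_\ell - 1)) \ll \xi_{f,T,\vu}$ follows from the definition $\xi(x,s_1,s_2) = \max_{s_1 \le s \le s_2}\xi(x,s)$ together with the fact that a bounded change in the basepoint (the fixed reference point $\pi(\Psi(\vu_0))$ appearing in \equ{eq: defn xi1}) and a bounded shift of the parameter interval affect $\xi$ only up to multiplicative constants.
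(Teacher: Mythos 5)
Your overall strategy is the same as the paper's: use \equ{equ2} to move the point $x_0a_{2\ell/\alpha}a_t$ onto the reference geodesic through $\pi\circ\Psi(\vu/|\vu|)$, and then check, via \equ{eq: choice of Ri}, \equ{eq: T is large} and \equ{eq: ellbounds}, that the resulting time parameter lies within $O(1)$ of the interval $\left[\log(T|\vu|/R(f)),\log(T|\vu|/r(f))\right]$ defining $\xi_{f,T,\vu}$. However, two points in your execution are genuinely wrong. First, the left-hand side of the lemma is the two-argument $\xi$ of \equ{eq: defn xi1}: it is $e^{\dist(xa_t,\pi(\Psi(\vu_0)))}$ evaluated at the \emph{single} time $t=\log\bigl((T-D)/R_\ell-1\bigr)$, not a maximum over the segment $0\le s\le t$. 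Your plan to show that the \emph{entire} parameter range is contained in the reference interval is therefore not only unnecessary but false: the initial point $x_0a_{2\ell/\alpha}=\pi\circ\Psi(e^{\ell/\alpha}\vu)$ sits at parameter $2\ell/\alpha+2\log|\vu|$ on the reference geodesic, which lies below $\log(T|\vu|/R(f))$ by an amount that grows with $T$. Only the terminal point lands in the interval, and that is all the lemma asserts or needs.

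Second, the bookkeeping itself. From $\Psi(\vu)=\Psi(\vu/|\vu|)a_{2\log|\vu|}$ one gets $x_0a_{2\ell/\alpha}a_t=\pi\circ\Psi(\vu/|\vu|)\,a_{2\log|\vu|+2\ell/\alpha+t}$: the factor $2$ in \equ{equ2} affects only the initial translate, not the flow time $t$ (there is no rescaling of $s$ to $2s$), and the shift $2\log|\vu|$ must not be dropped. With $t=\log\bigl((T-D)/R_\ell-1\bigr)=\log\bigl(Te^{-\ell/\alpha}/|\vu|\bigr)+O(1)$ (this is \equ{eq: timelength1}, using $R_\ell=e^{(\ell+1)/\alpha}|\vu|$ and $T\ge cD$), the terminal parameter equals $\log T+\ell/\alpha+\log|\vu|+O(1)$; since \equ{eq: ellbounds} places $\ell/\alpha$ within $O(1)$ of $\left[-\log R(f),-\log r(f)\right]$, this parameter is within $O(1)$ of the reference interval, as required. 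The endpoint you compute, $\log(T/|\vu|)$, differs from the correct one by $\ell/\alpha+2\log|\vu|$, an unbounded quantity, and in general does not lie in $\left[\log(T|\vu|/R(f)),\log(T|\vu|/r(f))\right]$ at all. Your closing observation, that an $O(1)$ shift of the time parameter changes $\xi$ only by a bounded multiplicative factor (by left-invariance of the metric), is correct and is exactly what closes the argument once the endpoint is computed correctly.
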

\begin{proof}
By \equ{eq: choice of Ri} and \equ{eq: T is large} one has 
\eq{eq: timelength1}{
\frac{T}{|\vu|}e^{-\ell/\alpha} \asymp
\frac{T-D}{R_\ell}-1. 
}
Using \equ{equ2}, one has the equality
 $$x_0a_{2\ell/\alpha}a_{ \log(T
e^{-\ell/\alpha}/|\vu|)}=\Psi\left(\frac{\vu}{|\vu|}\right)a_{\log
T+\ell/\alpha+\log |\vu|},$$ 
  and the use of inequality \equ{eq: ellbounds} proves the claim.
\end{proof}

Define $\til f_{\ell}$ and $\bar f_{\ell}$ by \equ{eq: defn til f}, \equ{eq:
defn bar f}, and set 
$$\eta = 1-\theta\delta_\Gamma/8$$
so that
\eq{eq: property of eta}{
\alpha^2 T^{\eta} =
\left( \frac{R(f) }{|\vu|\, \xi_{f,T,\vu} } \right)^{2\theta \delta_0}
T^{1-\theta \delta_0}, \ \ \  \ \frac{T}{\alpha} = \left( \frac{|\vu|\,
\xi_{f,T,\vu} }{R(f) } \right)^{\theta \delta_0} T^{1-\theta \delta_0}.  
}
Then for a lower bound we have:
\begin{align*}
S_{f,\vu}(T) & = \sum_{\gamma \in \Gamma_T} f(\gamma \vu) = \sum_{\ell}
\sum_{\gamma \in \Gamma_T} f_{\ell}(e^{\ell/\alpha} \gamma \vu) 
\stackrel{\equ{equ2}}{=} \sum_{\ell} \sum_{\gamma \in
\Gamma_T} \int_{-\infty}^{\infty}  \til f_{\ell}
(\gamma \til \vu a_{2\ell/\alpha}h_s)\, ds \\
& \stackrel{f_{\ell} \geq 0}{\geq} \sum_{\ell} \sum_{\gamma \in
\Gamma_T} \int_{-\left((T-D)/R_{\ell}- 
1\right)}^{(T-D)/R_{\ell}-1}  \til f_{\ell}
(\gamma \til \vu a_{2\ell/\alpha}h_s)\, ds \\ 
& \stackrel{\equ{eq: for boundary effect} }{\geq} 
\sum_{\ell}  \int_{-\left((T-D)/R_{\ell}-
1\right)}^{(T-D)/R_{\ell}-1} \bar{f}_{\ell} (x_0 a_{2\ell/\alpha} h_s)\, ds \\
& \stackrel{\equ{eq: Strombergsson}, \equ{eq: timelength1} }{\geq} \frac{2}{\mu(\gmg)} \sum_{\ell}
\left(\frac{T-D}{R_{\ell}} - 1 \right) 
\int_{\gmg} \bar{f}_{\ell} \, d\mu \\
& - c_1\sum_{\ell} \|\bar{f}_{\ell}\|_{\theta}
\left(\frac{T-D}{R_{\ell}}-1\right)^{\eta} \xi_{f,T,\vu}^{1-\eta} \\
& \stackrel{\equ{eq: normalization again}, \equ{eq:
holdercontrol}
}{\geq} \frac{2}{\mu(\gmg)}
\sum_{\ell} \frac{T-D-R_\ell}{R_{\ell}} \int_{\PP} f_{\ell} \, dx \\
& - c_2 T^{\eta}
\sum_{\ell} \|f_{\ell}\|_{\theta} R_\ell^{-\eta}
\left(1- \frac{D}{T} -\frac{R_\ell}{T}\right)^{\eta} \xi_{f,T,\vu}^{1-\eta}
\\
& \stackrel{\equ{eq: estimateofholder},
\equ{eq: intup}, \equ{eq: boundonrl}
}{\geq} \frac{2(T - c_3D) }{\mu(\gmg)}  
e^{-2/\alpha} \int_{\PP} \frac{f(\vv)}{\vv \star \vu} \, d\vv \\
& \; \; \; \; \ \ 
-  c_4 \alpha T^{\eta} \|f\|_{\theta} \left( \frac{R(f)}{|\vu|}\right)^{\eta}  
\left (\alpha \log v^{(\vu)}(f)+2 \right) \xi_{f,T,\vu}^{1-\eta} \\ 
& \stackrel{e^{2/\alpha}=1+O(1/\alpha) }{\geq}
\frac{2T}{\mu(\gmg)} \int_{\PP} 
\frac{f(\vv)}{\vv \star \vu} \, d\vv - c_5 \left( D+\frac{T}{\alpha}\right) \int_{\PP} 
\frac{f(\vv)}{\vv \star \vu} \, d\vv \\
& \; \; \; \; \ \
- c_6 \alpha^{2} T^{\eta}\ \left(\log v^{(\vu)}(f)  +1\right)
\|f\|_{\theta} \left( \frac{R(f)}{|\vu|}\right)^{\eta} \xi_{f,T,\vu}^{1-\eta}.
\end{align*}
Using \equ{eq: intnosolarge} and \equ{eq: property of eta} we obtain
$$ 
\frac{2T}{\mu(\gmg)} \int_{\PP} \frac{f(\vv)}{\vv \star \vu} \, d\vv
- S_{f,\vu}(T)  
\ll  \|f \|_{\theta}\frac{R(f)}{|\vu|} \left( D+B T^{1-\theta
\delta_0} \xi_{f,T,\vu}^{\theta \delta_0} \right), 
$$
as claimed. The proof of the opposite bound is similar.
\qed

\begin{proof}[Proof of Corollary \ref{cor: main, SL(2,Z)}]
As in the proof of Corollary \ref{cor: applications}, we apply Theorem
\ref{thm: main, SL(2,Z)} to $f$ and  
$$\vw= \vw(T)= \frac{\vu}{T^{\alpha}}.$$
Since the slope of $\vu$ was assumed be $\beta$-diophantine, 
$$\hat{\xi}\left(\vw,\log \left( \frac{T |\vw|}{R(f)} \right),\log
\left(\frac{T |\vw|}{r(f)}\right) \right)  \ll \left( \frac{T
|\vw|}{r(f)} \right)^{\frac{\beta-2}{\beta}}  \ll
T^{\frac{(1-\alpha)(\beta-2)}{\beta}}.$$ 
In order to apply Theorem \ref{thm: main, SL(2,Z)}, we need to check that
$$D(\vw(T), f) \leq c T^{|\alpha|} \ll T,$$
which is always true, and that 
$$\frac{|\vw|}{R(f)} \hat{\xi}\left(\vw,\log \left( \frac{T
|\vw|}{R(f)} \right),\log \left(\frac{T |\vw|}{r(f)}\right) \right)
\ll  T^{-\alpha}T^{ \frac{(1-\alpha)(\beta-2)}{\beta} } \ll T,$$ 
which is true for all large enough $T$ by virtue of our assumption that 
$\alpha > - \frac{1}{\beta-1}.$
Therefore for some $C$ depending on $f$ and $\vu$ but independent of $T$,
\[ 
\left|S_{f, \vw}(T) -
\frac{2T^{1+\alpha}}{\mu(\gmg)} \int_{\PP} 
\frac{f(\vv)}{\vv \star \vu}\, d\vv \right|
<CT^\alpha\left(T^{|\alpha|}+T^{1-\theta
\delta_0(1+\alpha)}T^{\frac{\theta\delta_0(1-\alpha)(\beta-2)}{\beta}} \right).
\]
Dividing through by $T^{1+\alpha}$ gives
$$\left| \frac{1}{T^{1+\alpha}} S_{f, \vw}(T) -
\frac{2}{\mu(\gmg)} \int_{\PP} 
\frac{f(\vv)}{\vv \star \vu}\, d\vv \right| \leq C\left(
T^{|\alpha|-1}+T^{-2\theta\delta_0
\left(\frac{\alpha(\beta-1)+1}{\beta} \right)}\right).$$ 
Taking $\delta=\min \big( 1-|\alpha|, 2\theta\delta_0
\left( \alpha + (1-\alpha)/\beta \right)
\big ) >0$, we obtain \equ{main cor}. 
\end{proof}

\begin{proof}[Proof of Corollary \ref{cor : Dirichlet}]
 Let $f$ be a nonnegative smooth function, vanishing outside a disk of
radius $1$. Then for $\lambda>0$, the function 
 $f_\lambda(\vw)=f(\lambda^{-1}(\vw-\vv))$ vanishes outside a disk of
radius $\lambda$ centered on $\vv$, and is Lipschitz. 
 For all $\lambda$ small enough, 
 $$||f_\lambda ||_{1} \ll \lambda^{-1}, \   \  \int_\PP
\frac{f_\lambda(\vw)}{\vw \star \vu} d\vw \asymp \lambda^2, \ \ 
R(f_\lambda) \ll 1,  \  \mathrm{and \ } \ \  \hat{\xi}_{f_\lambda,T,\vu}
\ll T^{\frac{\beta-2}{\beta}}$$
(by Lemma \ref{lem: betadiophantine}). 
Taking $\theta =1$ in \equ{eq: mainfv2}, we find that there are positive
constants $c_i$ such that
$$S_{f_\lambda, \vu}(T) \geq  c_1 T\lambda^{2} -
c_2\lambda^{-1}\left(c_3+c_4T^{1- 2\delta_0 / \beta} \right).
$$
Therefore there is a positive constant $C$ such that if we set 
$\lambda=CT^{-\frac{2\delta_0}{3\beta}}$, then $S_{f_\lambda, \vu}(T) >
0$ for all large enough $T$. This proves (1). 

The idea for (2) is the following classical geometrical
property: there exists a fixed compact subset in $\gmg$ which every
nondivergent geodesic intersects infinitely many times. So for all
$\vu$ with irrational slope, there 
exists a sequence $(s_i)_i$ tending to infinity, such that
$\xi(\Psi(\vu/|\vu|),s_i)$ is uniformly bounded. Since we have
$R^{(\vu)}(f_\lambda) \asymp r^{(\vu)}(f_\lambda) \asymp 
\vv \star \vu$ for $\lambda<1$, we find that
$\xi_{f_\lambda,T_i,\vu}$ is bounded for the times 
$$T_i= \frac{\vv\star \vu}{|\vu|^2}e^{s_i}.$$
We now proceed as before,
but with $\xi_{f_\lambda,T_i,\vu}$  instead of 
$\hat{\xi}_{f_\lambda,T,\vu}$ (which is legitimate, because in the
proof of Theorem \ref{thm: main, SL(2,Z)}, we used $\xi$ instead of
$\hat{\xi}$) . 
\end{proof}

\section{Acknowledgments}
We thank Alex Gorodnik, S\'ebastien Gouez\"el and Amos Nevo for useful discussions. The first
author would like to thank the Center for Advanced Studies in
Mathematics at Ben Gurion University for 
its hospitality during a stay where this work was first conceived. The
work of the second author was supported by the Israel Science
Foundation.

\end{document}